\renewcommand\thefigure{\thesection.\@arabic\c@figure}
\renewcommand\thetable{\thesection.\@arabic\c@table}
\newtheorem{theorem}{Theorem}[section]
\newtheorem{lemma}[theorem]{Lemma}
\newtheorem{proposition}[theorem]{Proposition}
\newtheorem{remark}[theorem]{Remark}
\newcommand{\mc}[1]{{\mathcal #1}}
\newcommand{\ms}[1]{{\mathscr #1}}
\newcommand{\mf}[1]{{\mathfrak #1}}
\newcommand{\bb}[1]{{\mathbb #1}}
\newcommand{\bs}[1]{{\boldsymbol #1}}
\newcommand{\mce}[1]{{\mathscr E_N^{#1}}}
\renewcommand{\Cap}{{\rm cap}}
\begin{document}

\title[]{Metastability of reversible condensed zero range processes on
  a finite set}

\author{J. Beltr\'an, C. Landim}

\address{\noindent Institut de Math\'ematiques, \'Ecole Polytechnique
  F\'ed\'erale de Lausanne, Station 8, CH-1015 Lausanne, Switzerland.
  and PUCP, Av. Universitaria cdra. 18, San Miguel, Ap. 1761, Lima
  100, Per\'u.  \newline e-mail: \rm \texttt{johel@impa.br} }

\address{\noindent IMPA, Estrada Dona Castorina 110, CEP 22460 Rio de
  Janeiro, Brasil and CNRS UMR 6085, Universit\'e de Rouen, Avenue de
  l'Universit\'e, BP.12, Technop\^ole du Madril\-let, F76801
  Saint-\'Etienne-du-Rouvray, France.  \newline e-mail: \rm
  \texttt{landim@impa.br} }

\keywords{Metastability, condensation, zero range processes}

\begin{abstract}
  Let $r: S\times S\to \bb R_+$ be the jump rates of an irreducible
  random walk on a finite set $S$, reversible with respect to some
  probability measure $m$. For $\alpha >1$, let $g: \bb N\to \bb R_+$
  be given by $g(0)=0$, $g(1)=1$, $g(k) = (k/k-1)^\alpha$, $k\ge
  2$. Consider a zero range process on $S$ in which a particle jumps
  from a site $x$, occupied by $k$ particles, to a site $y$ at rate
  $g(k) r(x,y)$. Let $N$ stand for the total number of particles. In
  the stationary state, as $N\uparrow\infty$, all particles but a
  finite number accumulate on one single site. We show in this article
  that in the time scale $N^{1+\alpha}$ the site which concentrates
  almost all particles evolves as a random walk on $S$ whose
  transition rates are proportional to the capacities of the
  underlying random walk.
\end{abstract}

\maketitle

\section{Introduction}
\label{sec-1} 

Fix a finite state space $S$ of cardinality $\kappa\ge 2$ and consider
an irreducible continuous time random walk $\{X_t : t\ge 0\}$ on $S$
which jumps from $x$ to $y$ at some rate $r(x,y)$. Assume that this
dynamics is reversible with respect to some probability measure $m$ on
$S$: $m(x) r(x,y) = m(y) r(y,x)$, $x$, $y \in S$. Denote by $\Cap_S$
the capacity associated to this random walk: For two disjoint proper
subsets $A$, $B$ of $S$,
\begin{equation*}
\Cap_S(A,B) \;=\; \inf_{f\in \mc B(A,B)} \frac 12 \sum_{x,y\in S} m(x)
\, r(x,y) \, \{f(y) - f(x)\}^2\; ,
\end{equation*}
where $\mc B(A,B)$ stands for the set of functions $f:S\to \bb R$
equal to $1$ at $A$ and equal to $0$ at $B$. When $A = \{x\}$,
$B=\{y\}$, we represent $\Cap_S(A,B)$ by $\Cap_S(x,y)$.

Let $M_\star$ be the maximum value of the probability measure $m$:
$M_\star = \max \{m(x) : x\in S\}$ and denote by $S_\star$ the sites
where $m$ attains its maximum value: $S_\star = \{x \in S : m(x) =
M_\star\}$. Of course, in the symmetric, nearest--neighbor case, where
$r(x,y)=1$ if $y = x \pm 1$, modulo $\kappa$, and $r(x,y)=0$
otherwise, $m$ is constant and $S_\star$ and $S$ coincide.

Fix a real number $\alpha>1$. Let $g:\bb N\to \bb R$ be given by
$$
g(0)=0\; , \quad g(1)=1\;, \quad
\textrm{and}\quad g(n) \;=\; \Big( \frac{n}{n-1} \Big)^\alpha\;,
\; n\ge 2\;,
$$
so that $\prod_{i=1}^{n}g(i)=n^{\alpha}$, $n\ge 1$. Consider the
zero range process on $S$ in which a particle jumps from a site $x$,
occupied by $k$ particles, to a site $y$ at rate $g(k) r(x,y)$. Since
$g$ is decreasing, the dynamics is attractive in the sense that
particles on sites with a large number of particles leave them at a
slower rate than particles on sites with a small number of particles.

The total number of particles is conserved by the dynamics, and for
each fixed integer $N\ge 1$ the process restricted to the set of
configurations with $N$ particles, denoted by $E_N$, is
irreducible. Let $\mu_N$ be the unique invariant probability measure
on $E_N$. When $\alpha>2$, the measure $\mu_N$ exhibits a very
peculiar structure called condensation in the physics literature.
Mathematically, this means that under the stationary state, above a
certain critical density, as the total number of particles
$N\uparrow\infty$, only a finite number of particles are located on
the sites which do not contain the largest number of particles.

Condensation has been observed and investigated in shaken granular
systems, growing and rewiring networks, traffic flows and wealth
condensation in macroeconomics. We refer to the recent review by Evans
and Hanney \cite{eh}.

Several aspects of the condensation phenomenon for zero range dynamics
have been examined. Let the condensate be the site with the maximal
occupancy.  Precise estimates on the number of particles at the
condensated, as well as its fluctuations, have been obtained in
\cite{jmp, gss, emz06}. The equivalence of ensembles has been proved
by Gro{\ss}kinsky, Sch{\"u}tz and Spohn \cite{gss}.  Ferrari, Landim
and Sisko \cite{fls} proved that if the number of sites is kept fixed,
as the total number of particles $N\uparrow\infty$, the distribution
of particles outside the condensated converges to the grand canonical
distribution with critical density. Armendariz and Loulakis \cite{al}
generalized this result showing that if the number of sites $\kappa$
grows with the number of particles $N$ in such a way that the density
$N/\kappa$ converges to a value greater than the critical density, the
distribution of the particles outside the condensate converges to the
grand canonical distribution with critical density.

We investigate in this article the dynamical aspects of the
condensation phenomenon. Fix an initial configuration with the
majority of particles located at one site. Denote by $X^N_t$ the
position of the condensated at time $t\ge 0$. In case of ties, $X^N_t$
remains in the last position. The process $\{X^N_{t} : t\ge 0\}$
evolves randomly on $S$ according to some non-Markovian dynamics.

The main result of this article states that, for $\alpha>1$, in the
time scale $N^{1+\alpha}$, the process $\{X^N_t : t\ge 0\}$ evolves
asymptotically according to a random walk on $S_\star$ which jumps
from $x$ to $y$ at a rate proportional to the capacity $\Cap_S(x,y)$.
In the terminology of \cite{bl2}, we are proving that the condensate
exhibits a tunneling behavior in the time scale $N^{1+\alpha}$.

This article leaves two interesting open questions. The techniques
used here rely strongly on the reversibility of the process. It is
quite natural to examine the same problem for asymmetric zero range
processes where new techniques are required. On the other hand, the
number of sites is kept fixed. It is quite tempting to let the
number of sites grow with the number of particles. In this case, in
the nearest neighbor, symmetric model, for instance, the condensate
jumps from one site to another at rate proportional to the inverse of
the distance. The rates are therefore not summable and it is not clear
if a scaling limit exists.

Simulations for the evolution of the condensated have been performed
by Go\-dr\`e\-che and Luck \cite{gl}. The authors predicted the time
scale, obtained here, in which the condensate evolves and claimed that
the time scale should be the same for non reversible dynamics.

\section{Notation and results}
\label{sec0}

Throughout this article we fix a finite set $S$ of cardinality
$\kappa\ge 2$ and a real number $\alpha>1$. For each $S_0\subseteq S$
consider the set of configurations $E_{N,S_0}$, $N\ge 1$, given by
$$
E_{N, S_0}\;:=\;\big\{ \eta\in\bb N^{S_0} : \sum_{x\in S_0} \eta_x = N \big\}\;,
$$
where $\bb N=\{0,1,2,...\}$. When $S_0=S$, we use the shorthand $E_N$
for $E_{N,S}$. Define $a(n)=n^{\alpha}$ for $n\ge 1$ and set
$a(0)=1$. Let us also define $g:\bb N\to \bb R_+$,
$$
g(0)=0\; , \quad g(1)=1  \quad
\textrm{and}\quad g(n)=\frac{a(n)}{a(n-1)}\;, \;\; n\ge 2\;,
$$
in such a way that $\prod_{i=1}^{n}g(i)=a(n)$, $n\ge 1$, and $\{ g(n)
: n\ge 2\}$ is a strictly decreasing sequence converging to $1$ as
$n\uparrow\infty$.

Consider a random walk on $S$ with jump rates denoted by $r$. Its
generator $\mc L_S$ acts on functions $f:S\to \bb R$ as
$$
(\mc L_S f)(x) \;=\; \sum_{y\in S} r(x,y) \{f(y) - f(x) \}\;.
$$
Assume that this Markov process is irreducible and \emph{reversible}
with respect to some probability measure $m$ on $S$:
\begin{equation}
\label{f01}
m(x) \, r(x,y) \;=\; m(y)\, r(y,x)\;, \quad x,y \in S\;.
\end{equation}
Let $M_\star$ be the maximum value of the probability measure $m$, let
$S_\star \subset S$ be the sites of $S$ where $m$ attains its maximum
and let $\kappa_\star$ be the cardinality of $S_\star$:
\begin{equation*}
\begin{split}
& M_\star = \max \{ m(x) : x\in S\}\;,\quad 
S_\star = \{x \in S : m(x) = M_\star\} \quad \textrm{and}\quad
\kappa_\star \;=\; | S_\star | \;.
\end{split}
\end{equation*}
In addition, let $m_{\star}(x)=m(x)/M_{\star}$ so that
$m_{\star}(x)=1$ for any $x\in S_{\star}$.  Denote by $D_S$ the
Dirichlet form associated to the random walk:
\begin{equation}
\label{f14}
D_S(f) \;=\; \frac 12 \sum_{x, y \in S} m(x) r(x,y) \{f(y) - f(x) \}^2 
\end{equation}
for $f:S\to \bb R$, and denote by $\Cap_S(x,y)$ the capacity between
two different points $x,y\in S$:
\begin{equation}
\label{ff1}
\Cap_S(x,y) \;=\; \inf_{f\in \mc B(x,y)} \{ D_S(f) \}\;,
\end{equation}
where the infimum is carried over the set $\mc B(x,y)$ of all
functions $f:S\to \bb R$ such that $f(x) = 1$ and $f(y)=0$. \smallskip

For each pair $x,y\in S, x\not = y$, and $\eta\in E_N$ such that
$\eta_x>0$, denote by $\sigma^{xy}\eta$ the configuration obtained from
$\eta$ by moving a particle from $x$ to $y$:
$$
(\sigma^{xy}\eta)_z\;=\;\left\{
\begin{array}{ll}
\eta_x-1 & \textrm{for $z=x$} \\
\eta_y+1 & \textrm{for $z=y$} \\
\eta_z & \rm{otherwise}\;. \\
\end{array}
\right.
$$

For each $N\ge 1$, consider the zero range process defined as the
Markov process $\{\eta^N(t) : t\ge 0\}$ on $E_N$ whose generator $L_N$
acts on functions $F: E_N\to\bb R$ as
\begin{equation}
\label{f16}
(L_N F) (\eta) \;=\; \sum_{\stackrel{x,y\in S}{x\not = y}}
g(\eta_x) \, r(x,y) \, \big\{ F(\sigma^{xy}\eta) - F(\eta) \big\} \;. 
\end{equation}
The Markov process corresponding to $L_N$, $N\ge 1$, is irreducible
and reversible with respect to its unique invariant measure $\mu_N$
given by
$$
\mu_N(\eta) \;=\; \frac {N^{\alpha}} {Z_{N, S}} \, 
\prod_{x\in S} \frac{m_\star(x)^{\eta_x}}{ a(\eta_x)}
\;:=\;  \frac{N^{\alpha}}{Z_{N,S}} \, \frac{m_{\star}^\eta}
{a(\eta)} \;, \quad \eta\in E_N \;,
$$
where, for any $S_0\subseteq S$,
$$
m_\star^{\zeta} = \prod_{x\in S_0} m_\star(x)^{\zeta_x}\;, \quad
a(\zeta)\;=\;\prod_{x\in S_0}a(\zeta_x)\;, \quad \zeta\in {\bb
  N}^{S_0}\;, 
$$
and $Z_{N, S_0}$ is the normalizing constant
\begin{equation}
\label{f03}
Z_{N, S_0}\;=\; N^{\alpha} 
\sum_{\zeta\in E_{N,S_0}} \frac{m_\star^{\zeta}}{a(\zeta)}\;\cdot
\end{equation}

In Section \ref{sec2} we show that the sequence $\{Z_{N, S} : N\ge
1\}$ converges as $N\uparrow\infty$. This explains the factor
$N^\alpha$ in its definition. The precise statement is as follows. For
$x$ in $S$ and $\kappa\ge 2$, let
\begin{equation*}
\Gamma_x \;:=\; \sum_{j\ge 0} \frac{m_\star(x)^j}{a(j)}\;, \quad
\Gamma(\alpha) \;:=\; \sum_{j\ge 0} \frac{1}{a(j)}\;
\end{equation*}
so that $\Gamma(\alpha)=\Gamma_x$ for any $x\in S_{\star}$, and define
\begin{equation*}
Z_S \;:=\; \frac{\kappa_{\star}}{\Gamma(\alpha)} 
\prod_{z\in S} \Gamma_z \;=\; \kappa_{\star} \Gamma(\alpha)^{\kappa_\star -1}
\prod_{y\not\in S_\star} \Gamma_y\;.
\end{equation*}
\begin{proposition}
\label{zk}
For every $\kappa\ge 2$,
\begin{equation*}
\lim_{N\to\infty} Z_{N,S} = Z_{S}\;.
\end{equation*}
\end{proposition}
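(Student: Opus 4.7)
\emph{Proof plan.} Set $Y_{N, S_0} := \sum_{\xi \in E_{N, S_0}} m_\star^\xi/a(\xi)$ for $S_0 \subseteq S$, so that $Z_{N, S} = N^\alpha Y_{N, S}$. The key a priori bound is
\begin{equation*}
\sum_{j \ge 0} Y_{j, S_0} \;=\; \sum_{\xi \in \bb N^{S_0}} \frac{m_\star^\xi}{a(\xi)} \;=\; \prod_{y \in S_0} \Gamma_y \;<\; \infty\,,
\end{equation*}
which is finite because $\alpha > 1$ makes $\Gamma(\alpha) < \infty$ (hence $\Gamma_y = \Gamma(\alpha)$ for $y \in S_\star$) and $m_\star(y) < 1$ makes $\Gamma_y \le (1 - m_\star(y))^{-1}$ for $y \notin S_\star$. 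In particular $j \mapsto Y_{j, S_0}$ is summable.

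Fix a cutoff $M \ge 1$ and take $N > 2M$. Partition $E_N$ into the pairwise disjoint ``condensed'' sets $C_{M, x} := \{\zeta : \zeta_x \ge N - M\}$, $x \in S$ (disjoint because $\zeta_x + \zeta_{x'} \le N < 2(N-M)$), and the ``spread'' remainder $R_M := \{\zeta : \max_y \zeta_y < N - M\}$. Parametrising $\zeta \in C_{M, x}$ by $\zeta_x = N - j$ and $(\zeta_y)_{y \ne x} \in E_{j, S \setminus \{x\}}$ with $0 \le j \le M$ gives
\begin{equation*}
N^\alpha \sum_{\zeta \in C_{M, x}} \frac{m_\star^\zeta}{a(\zeta)} \;=\; \sum_{j = 0}^{M} \Big(\frac{N}{N - j}\Big)^\alpha m_\star(x)^{N - j}\, Y_{j, S \setminus \{x\}}\,.
\end{equation*}
Holding $M$ fixed and sending $N \to \infty$, for $x \in S_\star$ each prefactor tends to $1$ and is dominated by $2^\alpha$, so dominated convergence gives the limit $\sum_{j = 0}^{M} Y_{j, S \setminus \{x\}}$, which in turn tends to $\prod_{y \ne x} \Gamma_y = \Gamma(\alpha)^{\kappa_\star - 1} \prod_{y \notin S_\star} \Gamma_y$ as $M \to \infty$; for $x \notin S_\star$ the factor $m_\star(x)^{N - M} \to 0$ kills the contribution. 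Summing over $x \in S_\star$ produces exactly $Z_S$.

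For the remainder, assign to each $\zeta$ a unique $x^*(\zeta) = \mathrm{argmax}(\zeta)$ using a fixed tiebreak: then $\zeta \in R_M$ forces $\zeta_{x^*} = k$ for some $\lceil N/\kappa \rceil \le k \le N - M - 1$ with $(\zeta_y)_{y \ne x^*} \in E_{N-k, S \setminus \{x^*\}}$. Dropping the constraint $\max_y \zeta_y \le k$ gives
\begin{equation*}
\sum_{\zeta \in R_M} \frac{m_\star^\zeta}{a(\zeta)} \;\le\; \sum_{x^* \in S} \sum_{k = \lceil N/\kappa \rceil}^{N - M - 1} \frac{m_\star(x^*)^k}{k^\alpha}\, Y_{N-k, S \setminus \{x^*\}}\,.
\end{equation*}
Multiplying by $N^\alpha$, substituting $j = N - k$, and using $N^\alpha/(N - j)^\alpha \le \kappa^\alpha$ on the range $j \le N(1 - 1/\kappa)$ together with $m_\star(x^*) \le 1$ yields
\begin{equation*}
N^\alpha \sum_{\zeta \in R_M} \frac{m_\star^\zeta}{a(\zeta)} \;\le\; \kappa^\alpha \sum_{x^* \in S} \sum_{j > M} Y_{j, S \setminus \{x^*\}} \;\longrightarrow\; 0 \quad\text{as } M \to \infty\,.
\end{equation*}
Combining both pieces, taking $N \to \infty$ and then $M \to \infty$ gives $Z_{N, S} \to Z_S$.

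The only subtle step is controlling the spread remainder: a naive bound using $1/a(\zeta) = O(N^{-2\alpha})$ on $R_M$ together with $|E_N| = O(N^{\kappa - 1})$ is too crude when $\kappa$ is large compared with $\alpha$, so one must decompose $R_M$ by the argmax and identify the resulting double sum with a tail of the globally summable sequence $j \mapsto Y_{j, S \setminus \{x^*\}}$.
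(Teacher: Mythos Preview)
Your argument is correct and in fact more streamlined than the paper's. Both proofs split $E_N$ into ``condensed'' configurations (one site carrying almost all particles) and a ``spread'' remainder, and both handle the condensed part in the same way. The difference lies in the remainder: the paper controls it by an induction on $\kappa$ (Lemma~\ref{s01}), obtaining the quantitative bound
\[
N^{\alpha}\sum_{\eta\in E_{N,S}(\ell)}\frac{1}{a(\eta)}\;\le\;\frac{C_\kappa}{\ell^{\alpha-1}}\,,
\]
together with the auxiliary Lemmata~\ref{s02}, \ref{s03} and \ref{esti1}. You sidestep the induction entirely by decomposing $R_M$ according to the argmax site and recognising that after the change of variables $j=N-k$ and the crude bound $(N/(N-j))^\alpha\le\kappa^\alpha$, what remains is a tail of the \emph{globally summable} sequence $j\mapsto Y_{j,S\setminus\{x^*\}}$; summability follows immediately from $\sum_j Y_{j,S_0}=\prod_{y\in S_0}\Gamma_y<\infty$. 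This is a genuinely simpler route to Proposition~\ref{zk}.

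Two small remarks. First, your final line ``taking $N\to\infty$ and then $M\to\infty$'' should really be phrased via $\limsup$/$\liminf$: since $B_N^M\ge 0$ and $\limsup_N B_N^M\le c_M\to 0$ while $\lim_N A_N^M=a_M\to Z_S$, you get $a_M\le\liminf_N Z_{N,S}\le\limsup_N Z_{N,S}\le a_M+c_M$ for every $M$; the ingredients for this are all present in your write-up. Second, note that the paper's inductive Lemma~\ref{s01} is not merely scaffolding for Proposition~\ref{zk}: its explicit rate $C_\kappa/\ell^{\alpha-1}$ is invoked again in the upper-bound section (Lemma~\ref{est1}), so the extra work there is not wasted.
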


Denote by $D_N$ the Dirichlet form associated to the generator
$L_N$. An elementary computation shows that
\begin{equation*}
D_N(F)\;=\; \frac 12 \sum_{\stackrel{x,y\in S}{x\not = y}}
\sum_{\eta\in E_N} \mu_N(\eta)\, 
g(\eta_x) \, r(x,y) \, \{ F(\sigma^{xy}\eta) - F(\eta) \}^2 \;,
\end{equation*}
for every $F:E_N\to\bb R$.

For every two disjoint subsets $A,B$ of $E_N$ denote by
$\mc C_N(A,B)$ the set of functions $F:E_N\to\bb R$ defined by  
$$
\mathcal C_N(A,B) \;:=\; \{ F : \textrm{$F(\eta)=1$ 
$\forall$ $\eta\in A$ and $F(\xi)=0$ $\forall$ $\xi\in B$}\}\;.
$$
The capacity corresponding to this pair of disjoint subsets $A,B$ is
defined as
$$
\Cap_N(A,B) \;:=\; \inf\big\{\,D_N(F) : F\in \mathcal C_N(A,B)\,\big\}\;.
$$
Since $D_N(F)=D_N(1-F)$, $\Cap_N(A,B) = \Cap_N(B,A).$

Fix a sequence $\{\ell_N : N\ge 1\}$ such that $1\ll \ell_N \ll N$
and, for each $z\in S\setminus S_{\star}$, fix a sequence $\{b_N(z) :
N\ge 1\}$ such that $1\ll b_N(z)$:
\begin{equation}
\label{f18}
\lim_{N\to\infty} \ell_N \;=\; \infty\;, \quad 
\lim_{N\to\infty} \ell_N/N \;=\; 0\;, \quad \textrm{and}\quad
\lim_{N\to\infty} b_N(z) \;=\; \infty\;,
\end{equation}
for all $z\in S\setminus S_{\star}$. For $x$ in $S_\star$, let
\begin{equation*}
\ms E^x_N  \;:=\; \Big\{\eta\in E_N : \eta_x \ge N - \ell_N \,,\,
\eta_z \le b_N(z)\,,\, z\not\in S_\star \Big\}\;.
\end{equation*}
Obviously, $\ms E^x_N\not = \varnothing$ for all $x\in S_{\star}$ and
every $N$ large enough. In the case where the measure $m$ is uniform,
the second condition is meaningless and the set $\ms E^x_N$ becomes
$\ms E^x_N = \{\eta\in E_N : \eta_x \ge N - \ell_N\}$. 

Condition $\ell_N/N\to 0$ is required to guarantee that on each set
$\ms E^x_N$ the proportion of particles at $x\in S_\star$,
i.e. $\eta_x/N$, is almost one. As a consequence, for $N$ sufficiently
large, the subsets $\ms E^x_N$, $x\in S_\star$, are pairwise disjoint.
>From now on, we assume that $N$ is large enough so that the partition
\begin{equation}
\label{f15}
E_N\;=\; \Big( \bigcup_{x\in S_\star}\mce x \Big) \cup \Delta_N
\end{equation}
is well defined, where $\Delta_N$ is the set of configurations which
do not belong to any set $\ms E^x_N$, $x\in S_\star$.  

The assumptions that $\ell_N\uparrow \infty$ and that $b_N(z)\uparrow
\infty$ for all $z\not \in S_{\star}$ are sufficient to prove that
$\mu_N(\Delta_N)\to 0$, as we shall see in (\ref{delta}), and to
deduce the limit of the capacities stated in Theorem \ref{mt1}
below. In particular, in these two statements we may set $b_N(z)=N$,
$z\not \in S_\star$, in order to discard the second restriction in the
definition of the sets $\ms E^x_N$, $x\in S_\star$. We need, however,
further restrictions on the growth of $\ell_N$ and $b_N(z)$ to prove
the tunneling behaviour of the zero range processes presented in
Theorem \ref{mt2} below.

To state the first main result of this article, for any nonempty
subset $S_\star'$ of $S_\star$, let $\mce{}(S_\star')\,=\, \cup_{x\in
  S_\star'} \mce x$, and let
\begin{equation}
\label{defi}
I_\alpha \;:=\; \int_{0}^1 u^\alpha(1-u)^\alpha \, du\;.
\end{equation}

\begin{theorem}
\label{mt1}
Assume that $\kappa_\star\ge 2$. Fix a nonempty subset $S_\star^1
\subsetneq S_\star$ and denote $S_\star^2= S_\star\setminus
S_\star^1$. Then,
\begin{equation*}
\lim_{N\to\infty} N^{1+\alpha} \Cap_N\big(\ms E_N(S_\star^1), \ms
E_N({S_\star^2})\big) \;=\; \frac 1{M_\star \, \kappa_\star \,
  \Gamma(\alpha) \, I_\alpha} 
\sum_{x \in S_\star^1, y\in S_\star^2} \Cap_S(x,y) \; .
\end{equation*}
\end{theorem}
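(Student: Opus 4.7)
The proof will proceed via matching upper (Dirichlet principle) and lower (Thomson principle) asymptotic bounds on the capacity. The structure of the claimed limit---a sum $\sum \Cap_S(x,y)$ divided by $I_\alpha$---reflects a factorization of the bottleneck cost into a spatial piece ($\Cap_S$ from the equilibrium potential on $S$) and a one-dimensional mass-transfer piece (from a profile $\phi_\alpha$ solving a weighted Sobolev problem on $[0,1]$).

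\textbf{Upper bound.} For each $(x,y)\in S_\star^1\times S_\star^2$, let $h_{xy}\colon S\to[0,1]$ be the $r$-harmonic function on $S\setminus\{x,y\}$ with $h_{xy}(x)=1$ and $h_{xy}(y)=0$, so that $D_S(h_{xy})=\Cap_S(x,y)$. Let $\phi_\alpha$ solve
\begin{equation*}
\min\Big\{\int_0^1 \phi'(u)^2\,[u(1-u)]^{-\alpha}\,du \;:\;\phi(0)=0,\ \phi(1)=1\Big\},
\end{equation*}
with explicit solution $\phi_\alpha(u)=I_\alpha^{-1}\int_0^u v^\alpha(1-v)^\alpha\,dv$ achieving the minimum value $1/I_\alpha$. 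Assemble a test function $F\in\mc C_N(\ms E_N(S_\star^1),\ms E_N(S_\star^2))$ that, on configurations near the $(x,y)$-bottleneck (i.e., $\eta_x+\eta_y\approx N$ with small occupation elsewhere), takes the form $F(\eta)\approx\phi_\alpha(W_{xy}(\eta)/N)$ with $W_{xy}(\eta):=\sum_z\eta_z\,h_{xy}(z)$, and patch these pieces consistently across the partition \eqref{f15}. Using $\mu_N(\eta)\sim Z_S^{-1}N^{-\alpha}[u(1-u)]^{-\alpha}$ for bottleneck configurations (via Proposition \ref{zk}) and $g(\eta_{x'})\to 1$ on condensate sites, the increment factorizes as $F(\sigma^{x'y'}\eta)-F(\eta)\approx\phi_\alpha'(W_{xy}/N)\,(h_{xy}(y')-h_{xy}(x'))/N$. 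Averaging over fibers $\{W_{xy}\approx uN\}$---here invoking an equivalence-of-ensembles-type bound that equilibrates the particles lying outside the condensate---reduces the spatial sum to $D_S(h_{xy})/M_\star=\Cap_S(x,y)/M_\star$, while the Riemann sum in $k=\eta_y$ converts the $k$-sum into the 1D integral yielding the factor $1/I_\alpha$ at the optimal $\phi_\alpha$. Summing over pairs $(x,y)$ and dividing by $Z_S=\kappa_\star\Gamma(\alpha)^{\kappa_\star-1}\prod_{y\notin S_\star}\Gamma_y$ produces the claimed upper bound.

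\textbf{Lower bound and main obstacle.} The matching lower bound will follow from Thomson's principle: build a unit flow from $\ms E_N(S_\star^1)$ to $\ms E_N(S_\star^2)$ as a superposition of elementary flows indexed by pairs $(x,y)\in S_\star^1\times S_\star^2$, each of total mass proportional to $\Cap_S(x,y)$ and routed along the optimal $\Cap_S(x,y)$-flow on $S$ combined with the 1D density $\phi_\alpha'(u)\propto u^\alpha(1-u)^\alpha$; by convex duality, the flow norm factorizes dually to the Dirichlet computation and matches the upper bound. The main technical challenge is twofold. First, justifying the factorization demands uniform control of configurations in $\Delta_N$---in particular those with particles spread across many non-condensate sites, or with two distinct condensate sites each holding $\Theta(N)$ particles but not forming the intended pair---so that they contribute only subleading terms to both the Dirichlet form and the flow norm; this combines the a priori bound $\mu_N(\Delta_N)\to 0$ with sharper equivalence-of-ensembles estimates. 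Second, one must patch the test-function (and flow) constructions for different pairs coherently, without double-counting contributions from configurations close to several bottlenecks simultaneously.
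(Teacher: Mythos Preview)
Your strategy is sound and would yield the theorem, but it differs from the paper's in both halves. For the \emph{lower} bound the paper does not use Thomson's principle; instead it bounds the Dirichlet form of an arbitrary $F\in\mc C_N(\ms E_N(S_\star^1),\ms E_N(S_\star^2))$ from below by restricting to disjoint tubes $L^{x,y}_N=\{\eta:\eta_x+\eta_y\ge N-\ell\}$ and then, for each fixed $\xi\in E_{N-1}$, recognizing that $z\mapsto F(\xi+\mf d_z)$ is a function on $S$ whose Dirichlet form $D_S$ is bounded below by $\Cap_S(x,y)\{F(\xi+\mf d_x)-F(\xi+\mf d_y)\}^2$. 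This reduces the problem to a one-dimensional variational problem in $\xi_x$, solved explicitly. This route is more elementary than constructing and controlling an explicit flow, and it avoids all issues of overlapping tubes because extra terms only help a lower bound.

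For the \emph{upper} bound, your test function $\phi_\alpha\bigl(\sum_z\eta_z h_{xy}(z)/N\bigr)$ is different from the paper's: they order the sites as $x=z_1,\dots,z_\kappa=y$ by decreasing values of ${\bf f}_{xy}$ and set $F_{xy}(\eta)=\sum_{j=1}^{\kappa-1}\{{\bf f}_{xy}(z_j)-{\bf f}_{xy}(z_{j+1})\}\,H\bigl(N^{-1}\sum_{i\le j}\eta_{z_i}\bigr)$. The advantage of this telescoping form is that a single Cauchy--Schwarz step yields \emph{exactly} the factor $\{{\bf f}_{xy}(z_i)-{\bf f}_{xy}(z_j)\}^2$, so the spatial sum becomes $D_S({\bf f}_{xy})=\Cap_S(x,y)$ without any Taylor remainder to control. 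Your test function should also work, but note that the decoupling you attribute to an ``equivalence-of-ensembles'' step is in fact achieved by the elementary change of variables $\xi=\eta-\mf d_{z}$, which turns $\mu_N(\eta)g(\eta_z)$ into $m_\star(z)$ times a $\xi$-measure; no equilibration argument is needed. The patching across pairs, which you flag as a challenge, is handled in the paper via a partition of unity $\{\Theta^x_y\}$ on the simplex and by setting $F_{S^1}=\sum_{x\in S^1}\sum_{y\ne x}\Theta^x_y(\eta/N)F_{xy}(\eta)$, together with the symmetry $H(t)+H(1-t)=1$ to ensure the boundary conditions.
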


Note that the right hand side depends on the sites not in $S_\star$
through the capacities $\Cap_S(x,y)$. In the case where the
measure $m$ is constant, $S_\star = S$, 
$M_\star = \kappa_\star^{-1}$ and the right hand side becomes
\begin{equation*}
\frac 1{ \Gamma(\alpha) \, I_\alpha } 
\sum_{x \in S_\star^1, y\in S_\star^2} \Cap_S(x,y) \;.
\end{equation*}

To prove Theorem \ref{mt1}, we derive a lower and an upper bound for
the capacity. In the first part, we need to obtain a lower bound for
the Dirichlet form of functions in $\mc C_N(\ms E_N(S_\star^1), \ms
E_N( S_\star^2))$. To our advantage, since it is a lower bound, we may
neglect some bonds in the Dirichlet form we believe to be
irrelevant. On the other hand, and this is the main difficulty, the
estimate must be uniform over $\mc C_N(\ms E_N(S_\star^1), \ms E_N(
S_\star^2))$. As we shall see in Section \ref{sec3}, the proof of a
sharp lower bound gives a clear indication of the qualitative behavior
of the function which solves the variational problem appearing in the
definition of the capacity. With this information, we may propose a
candidate for the upper bound. Here, in contrast with the first part,
we have to estimate the Dirichlet form of a specific function, our
elected candidate, but we need to estimate all the Dirichlet form and
can not neglect any bond.

For each $\eta\in E_N$, let ${\bf P}^N_{\eta}$ stand for the
probability on the path space $D(\bb R_+, E_N)$ induced by the zero
range process $\{\eta^N(t) : t\ge 0\}$ introduced in (\ref{f16})
starting from $\eta\in E_N$. Expectation with respect to ${\bf
  P}^N_{\eta}$ is denoted by ${\bf E}^N_{\eta}$. In addition, for any
$A\subseteq E_N$, let $T_A$ denote the hitting time of $A$:
$$
T_A \;:=\;\inf \big\{ t\ge 0 : \eta^N(t) \in A \big\}\;.
$$
 
\begin{remark}
  It is well known (see e.g. Lemma 6.4 in \cite{bl2}) that the
  solution of the variational problem for the capacity is given by
\begin{equation*}
{\bf F_{S_\star^1, S_\star^2}} (\eta) \;=\; {\bf P}^N_{\eta} \Big[
T_{\ms E_N(S_\star^1)} < T_{\ms E_N(S_\star^2)} \Big] \;.
\end{equation*}
The candidate proposed in the proof of the upper bound provides,
therefore, an approximation, in the Dirichlet sense, of the function
${\bf F_{S_\star^1, S_\star^2}}$.
\end{remark}

The second main result of this article states that the zero range
process exhibits a metastable behavior as defined in \cite{bl2}. Fix a
nonempty subset $A$ of $E_N$. For each $t\ge 0$, let $\mc T^{A}_t$ be
the time spent by the zero range process $\{\eta^N(t) : t\ge 0\}$
on the set $A$ in the time interval $[0,t]$:
$$
\mc T^{A}_t \;:=\;\int_{0}^t \mathbf{1}\{\eta^N(s) \in A\} \,ds
$$
and let $\mc S^{A}_t$ be the generalized inverse of $\mc T^{A}_t:$
$$
\mc S^{A}_t \;:=\;\sup\{s\ge 0 : \mc T^{A}_s \le t\}\;.
$$
It is well known that the process $\{\eta^{N,A}(t) : t\ge 0\}$ defined
by $\eta^{N,A}(t) = \eta^N({\mc S^{A}_t})$ is a strong Markov process
with state space $A$ \cite{bl2}. This Markov process is called
the trace of the Markov process $\{\eta^N(t) : t\ge 0\}$ on $A$.

Consider the trace of $\{\eta^N(t) : t\ge 0\}$ on $\ms E_N(S_\star)$,
referred to as $\eta^{\ms E_N^\star}(t)$. Let $\Psi_N:\ms
E_N(S_\star)\mapsto S_\star$ be given by
$$
\Psi_N(\eta) \;=\; \sum_{x\in S_\star} x\, \mathbf 1\{\eta \in \ms
E^x_N\} 
$$
and let $X^N_t:=\Psi_N(\eta^{\ms E_N^\star}(t))$.

We prove in Theorem \ref{mt2} below that the speeded up non-Markovian
process $\{X^N_{tN^{\alpha+1}} : t\ge 0\}$ converges to the random
walk $\{X_t : t\ge 0\}$ on $S_\star$ whose generator $\bb L_{S_\star}$
is given by
\begin{equation}
\label{f17}
(\bb L_{S_\star} f) (x) \;=\;  \frac 1 { M_\star
\, \Gamma(\alpha) \, I_{\alpha} } 
\sum_{y\in S_\star} \Cap_S(x,y)\, \{f(y) - f(x) \}\;.
\end{equation}
For $x$ in $S_\star$, denote by $\bb P_x$ the probability measure on
the path space $D(\bb R_+, S_\star)$ induced by the random walk $\{X_t
: t\ge 0\}$ starting from $x$.

\renewcommand{\theenumi}{\Alph{enumi}}
\renewcommand{\labelenumi}{(\theenumi)}

\begin{theorem}
\label{mt2}
Assume that $\kappa_\star\ge 2$. If \eqref{f18} holds and
\begin{equation}\label{clb}
\lim_{N\to\infty}\frac{\ell_N^{\, 1+ \alpha(\kappa-1)}}{
N^{1+\alpha}} \, \prod_{z\in S\setminus S_\star} m_\star(z)^{-b_N(z)} \;=\;0
\end{equation}
then, for each $x\in S_\star$,
\begin{enumerate}
\item[({\bf M1})]
We have
\begin{equation*}
\lim_{N\to\infty} \inf_{\eta,\xi\in \ms E^x_N} {\bf P}^N_{\eta}
\big[\,T_{\{ \xi\}} < T_{{\ms E}_N(S_\star\setminus \{x\})}\,\big] \;=\; 1 \;;
\end{equation*}

\item[({\bf M2})] For any sequence $\xi_N\in \ms E^x_N$, $N\ge 1$, the
  law of the stochastic process $\{X^N_{tN^{\alpha +1}} : t\ge 0\}$
  under ${\bf P}^N_{\xi_N}$ converges to $\bb P_{x}$ as
  $N\uparrow\infty$;

\item[({\bf M3})]
For every $T> 0$,
$$
\lim_{N\to\infty} \sup_{\eta\in \ms E^x_N} {\bf E}^N_{\eta} \Big[\, 
\int_0^T {\bs 1}\big\{ \eta^N(sN^{\alpha +1})\in \Delta_N \big\}\, ds \,\Big]\;
=\; 0 \;.
$$
\end{enumerate}
\end{theorem}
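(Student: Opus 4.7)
The plan is to reduce all three assertions to the general tunneling framework of \cite{bl2}. In that framework, (M1)--(M3) follow once one has at hand (A) sharp asymptotics for the inter-well capacities, (B) the asymptotic invariant mass of each well and the negligibility of $\mu_N(\Delta_N)$, and (C) an internal mixing property inside each well. Ingredient (A) is already supplied by Theorem \ref{mt1}; below I describe how to obtain (B) and (C), the latter being the main obstacle.

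For (B), a direct calculation based on Proposition \ref{zk} and the formula
\[
\mu_N(\ms E^x_N) \;=\; \frac{N^\alpha}{Z_{N,S}} \sum_{\eta\in \ms E^x_N} \frac{m_\star^\eta}{a(\eta)}
\]
factors out $a(\eta_x)^{-1}=N^{-\alpha}(1+o(1))$ and sums the resulting $(\kappa-1)$-dimensional tails against the series defining $\Gamma_y$, yielding $\mu_N(\ms E^x_N)\to 1/\kappa_\star$ for every $x\in S_\star$. The same bookkeeping produces a quantitative bound $\mu_N(\Delta_N)\le C\ell_N^{-\alpha}+C\sum_{z\not\in S_\star} m_\star(z)^{b_N(z)}$, which vanishes under (\ref{f18}); the hypothesis (\ref{clb}) is tailored so that $N^{1+\alpha}\mu_N(\Delta_N)$ remains controllable against the lower capacity bound produced in (C).

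Ingredient (C), equivalent to (M1), is the main obstacle. For $x\in S_\star$ and $\eta,\xi\in \ms E^x_N$ I would construct an explicit path $\eta=\eta^{(0)},\dots,\eta^{(m)}=\xi$ inside $\ms E^x_N$, of length $m\le C\ell_N$, that moves a single particle at a time and keeps $\eta^{(j)}_x\ge N-C\ell_N$ throughout. Along such a path the jump rates $g(\eta^{(j)}_{x_j})\,r(x_j,y_j)$ are bounded below by a positive constant since $g(k)\to 1$, and at each intermediate configuration $\prod_{y\neq x} a(\eta^{(j)}_y)\le C\ell_N^{\alpha(\kappa-1)}$, so the Thomson (series) principle applied to a unit current along the path yields
\[
\Cap_N(\{\eta\},\{\xi\}) \;\ge\; \frac{c}{\ell_N^{1+\alpha(\kappa-1)}}\prod_{z\not\in S_\star} m_\star(z)^{b_N(z)}.
\]
Combining this with Theorem \ref{mt1} and the standard Dirichlet-principle inequality
\[
{\bf P}^N_\eta\bigl[T_{\{\xi\}}<T_{\ms E_N(S_\star\setminus\{x\})}\bigr] \;\ge\; 1 - \frac{\Cap_N(\{\eta\},\,\ms E_N(S_\star\setminus\{x\}))}{\Cap_N(\{\eta\},\{\xi\})},
\]
condition (\ref{clb}) forces the right-hand side to $1$ uniformly in $\eta,\xi\in\ms E^x_N$, proving (M1).

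With (M1), (A) and (B) in place, (M2) is a direct application of the abstract convergence theorem for trace processes in \cite{bl2}: the jump rates of the limit Markov chain from $x$ to $y$ in $S_\star$ are the limit inter-well capacities divided by the limit mass $1/\kappa_\star$ of the departing well, which by Theorem \ref{mt1} reduces to $\Cap_S(x,y)/(M_\star\Gamma(\alpha)I_\alpha)$, in agreement with (\ref{f17}). Finally, (M3) is obtained by combining the stationary identity
\[
{\bf E}^N_{\mu_N}\Bigl[\int_0^{TN^{1+\alpha}}{\bs 1}\{\eta^N(s)\in\Delta_N\}\,ds\Bigr] \;=\; TN^{1+\alpha}\mu_N(\Delta_N),
\]
whose right-hand side vanishes by (B) and (\ref{clb}), with (M1) and the strong Markov property applied at successive returns to $\ms E_N(S_\star)$, which transfers the estimate from the stationary to an arbitrary pointwise initial condition $\eta\in\ms E^x_N$.
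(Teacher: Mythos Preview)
Your strategy is the paper's strategy: reduce everything to the abstract tunneling framework of \cite{bl2} via Theorem~\ref{mt1}, the negligibility of $\mu_N(\Delta_N)$, and a path estimate inside each well. The paper phrases the three ingredients as conditions {\bf (H0)}, {\bf (H1)}, {\bf (H2)} and then cites Theorem~2.10 of \cite{bl2} to obtain (M1)--(M3) simultaneously; your (C) is exactly the paper's verification of {\bf (H1)} (the paper routes the path to the fixed reference configuration $\xi^x_N$ with all $N$ particles at $x$, but this is equivalent to your choice of arbitrary $\xi$), and your capacity-ratio inequality for (M1) is correct and is essentially what the abstract theorem does internally. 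One gloss: for (M2) you say the limit rate from $x$ to $y$ is the ``inter-well capacity divided by the mass $1/\kappa_\star$'', but Theorem~\ref{mt1} only gives capacities between complementary \emph{unions} of wells; the paper extracts the individual $r_N(x,y)$ via the polarization identity of Lemma~6.8 in \cite{bl2},
\[
\mu_N(\ms E^x_N)\, r_N(x,y) \;=\; \tfrac12\Big\{ \Cap_N(\ms E^x_N, \breve{\ms E}^x_N) + \Cap_N(\ms E^y_N, \breve{\ms E}^y_N) - \Cap_N\big(\ms E_N(\{x,y\}), \ms E_N(S_\star\setminus\{x,y\})\big)\Big\},
\]
which your sketch skips over.

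Your argument for (M3), however, fails as written. The stationary expectation of the unnormalized occupation time is indeed $TN^{1+\alpha}\mu_N(\Delta_N)$, but this does \emph{not} vanish: Lemma~\ref{s01} gives only $\mu_N(\Delta_N)\le C\ell_N^{-(\alpha-1)}$ (your exponent $-\alpha$ is already off), and $N^{1+\alpha}\ell_N^{-(\alpha-1)}$ diverges for every $\ell_N\ll N$. Condition~(\ref{clb}) controls the \emph{opposite} ratio and cannot rescue this. If you meant the normalized quantity $T\mu_N(\Delta_N)$, that vanishes by (B) alone---but then the ``strong Markov at successive returns'' step to pass from $\mu_N$ to an arbitrary $\eta\in\ms E^x_N$ is not an argument, it is a restatement of the difficulty. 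The paper sidesteps this entirely: once {\bf (H0)}--{\bf (H2)} are verified, Theorem~2.10 of \cite{bl2} delivers (M3) together with (M1) and (M2), so no separate treatment of (M3) is needed.
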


If $\kappa > \kappa_\star$, in order to fulfill conditions (\ref{f18})
and (\ref{clb}), we can take, for instance,
$$
b_N(z)\;=\; \frac{ - \log(\ell_N)}{\log(m_\star(z))} \quad
\textrm{for $z\in S\setminus S_\star$} \quad \textrm{and} \quad \ell_N
\;=\; N^{1/(\kappa -1)}
$$
if $\kappa_\star \ge 3$, $\ell_N = N^{1/[\kappa -(1/2)]}$ if
$\kappa_\star =2$.

According to the terminology introduced in \cite{bl2}, Theorem
\ref{mt2} states that the sequence of zero range processes
$\{\eta^{N}(t) : t\ge 0\}$ exhibits a tunneling behaviour on the
time-scale $N^{\alpha + 1}$ with metastates given by $\{\ms E^x_N :
x\in S_\star \}$ and limit given by the random walk $\{X_t : t\ge
0\}$.

Property ({\bf M3}) states that, outside a time set of order smaller
than $N^{\alpha +1}$, one of the sites in $S_\star$ is occupied by at
least $N-\ell_N$ particles. Property ({\bf M2}) describes the
time-evolution on the scale $N^{\alpha +1}$ of the site concentrating
the largest number of particles. It evolves asymptotically as a Markov
process on $S_\star$ which jumps from a site $x$ to $y$ at a rate
proportional to the capacity $\Cap_S(x,y)$ of the underlying random
walk. Property ({\bf M1}) guarantees that the process starting in a
metastate $\ms E^x_N$ thermalizes therein before reaching any other
metastate.

\begin{remark}
In \cite{gss}, it is shown that, in the case the number of sites increases with
the number of particles, the highest occupied
site contains a nonzero fraction of the particles in the system. This result includes the case $1<\alpha \le 2$. In contrast, when the number of sites is kept fixed, it seems to have been unnoticed in the literature that the condensation phenomenon appears also for $1<\alpha \le 2$. More precisely, if $1 \ll \ell_N \ll N$, then
\begin{equation*}
\lim_{N \to\infty}\mu_N \big( \eta_x \ge N - \ell_N \big) 
\;=\; 1/\kappa_\star \;, \quad \forall x\in S_\star\;.
\end{equation*}
Moreover, given that particles concentrate on $x\in S_\star$, the distribution of the
configuration on $S\setminus \{x\}$ is asymptotically given by the
grand-canonical measure determined by $m_\star$: For any $x$ in $S_\star$,
\begin{equation*}
\lim_{N \to\infty}\sup_{\zeta\in \mc G^x_N} \Big| \, 
\mu_N \big( \eta_z = \zeta_z \, , \, z\not = x
\,\big| \,  \eta_x\ge N - \ell_N \big) \,-\, \prod_{z\not =x} \frac 1{\Gamma_z}
\, \frac {m_\star(z)^{\zeta_z}}{a(\zeta_z)} \, \Big| \;=\; 0 \;,
\end{equation*}
where $\ms G^x_N := \{\zeta \in {\bb N}^{S\setminus\{x\}} : \sum_{z}\zeta_z \le \ell_N\}$.
There is just a small difference between the cases $1<\alpha \le 2$
and $\alpha>2$. While in the former, the variables $\{\eta_z : z\in
S_\star\}$ do not have finite expectation under the critical
grand-canonical measure, they do have finite expectation in the latter
case.
\end{remark}

In \cite{bl3}, we have proved Theorems \ref{mt1} and \ref{mt2} in the
case where the rates $r(\cdot, \cdot)$ in the definition of the
sequence of zero range processes corresponds to a random walk on a
finite complete graph. Since it covers the case $\kappa=2$, we may
suppose that $\kappa \ge 3$.

\section{The stationary measure $\mu_N$}
\label{sec2} 

In this section, we prove Proposition \ref{zk}. The proof relies on
four lemmata. We first show that the sequence $Z_{N,S}$ is bounded
below by a strictly positive constant and above by a finite
constant. Let
\begin{equation*}
\tilde Z_{N,\kappa} \;=\; N^{\alpha} 
\sum_{\eta\in E_N} \frac{1}{a(\eta)}
\end{equation*}
and note that $Z_{N,S} \le \tilde Z_{N,|S|}$.

\begin{lemma}
\label{s02}
For each $\kappa\ge 2$, there exists a constant $A_{\kappa}>0$, which
only depends on $\alpha$ and $\kappa$, such that 
\begin{equation*}
1 \;\le\; Z_{N,S} \;\le\; \tilde Z_{N,\kappa} \;\le\;
A_\kappa\;. 
\end{equation*}
\end{lemma}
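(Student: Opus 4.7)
I would prove the three inequalities separately. The outer two are immediate; all the work is in the uniform upper bound $\tilde Z_{N,\kappa}\le A_\kappa$, which I would establish by induction on $\kappa$ using a convolution decomposition obtained by conditioning on the occupancy of a single site.

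\textbf{Easy bounds.} For $Z_{N,S}\ge 1$, fix any $x_0\in S_\star$ and retain in (\ref{f03}) only the configuration $\zeta$ with $\zeta_{x_0}=N$, $\zeta_y=0$ for $y\ne x_0$. Since $m_\star(x_0)=1$, the contribution of this single term equals $N^\alpha/a(N)=1$; all other terms are nonnegative. For the middle inequality, the hypothesis $m_\star(x)\le 1$ for every $x\in S$ yields $m_\star^\zeta\le 1$ term by term.

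\textbf{Upper bound by induction on $\kappa$.} The base case $\kappa=1$ is trivial: $E_N$ is a singleton and $\tilde Z_{N,1}=N^\alpha/a(N)=1$. For the inductive step, isolate an arbitrary site $x_0\in S$, condition on $j=\eta_{x_0}$, and use $a(\eta)=a(j)\,a(\eta')$ with $\eta'\in E_{N-j,S\setminus\{x_0\}}$ to write
\begin{equation*}
\tilde Z_{N,\kappa}\;=\;\frac{N^\alpha}{a(N)}\;+\;N^\alpha\sum_{j=0}^{N-1}\frac{1}{a(j)}\cdot\frac{\tilde Z_{N-j,\kappa-1}}{(N-j)^\alpha}\;=\;1\;+\;N^\alpha\sum_{j=0}^{N-1}\frac{\tilde Z_{N-j,\kappa-1}}{a(j)\,(N-j)^\alpha}\;.
\end{equation*}
By the inductive hypothesis $\tilde Z_{N-j,\kappa-1}\le A_{\kappa-1}$, so the task reduces to bounding $N^\alpha\sum_{j=0}^{N-1}a(j)^{-1}(N-j)^{-\alpha}$ by a constant depending only on $\alpha$. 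The $j=0$ term contributes $N^\alpha\cdot 1\cdot N^{-\alpha}=1$. For $1\le j\le N-1$ the summand equals $j^{-\alpha}(N-j)^{-\alpha}$; by the symmetry $j\leftrightarrow N-j$ we may restrict to $j\le N/2$, where $(N-j)^\alpha\ge (N/2)^\alpha$, so
\begin{equation*}
\sum_{j=1}^{N-1}\frac{1}{j^\alpha(N-j)^\alpha}\;\le\;\frac{2^{\alpha+1}}{N^\alpha}\sum_{j\ge 1}\frac{1}{j^\alpha}\;=\;\frac{2^{\alpha+1}\bigl(\Gamma(\alpha)-1\bigr)}{N^\alpha}\;.
\end{equation*}
This gives $\tilde Z_{N,\kappa}\le 1+A_{\kappa-1}\bigl(1+2^{\alpha+1}(\Gamma(\alpha)-1)\bigr)$, closing the induction with an explicit recursion for $A_\kappa$ in terms of $\alpha$ and $\kappa$.

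\textbf{Main obstacle.} The subtle point is that a cruder argument---bounding $a(\eta)\ge(\max_x\eta_x)^\alpha\ge(N/\kappa)^\alpha$ and then summing the remaining $\kappa-1$ coordinates unconstrained against $\Gamma(\alpha)^{\kappa-1}$---yields only $\tilde Z_{N,\kappa}=O(N)$, off by a full factor of $N$. One must keep \emph{both} the factor $a(j)$ at the distinguished site and the factor $a(\eta')$ for the leftover particles; the correct scale $N^{-\alpha}$ for the sum emerges only from the convolution above together with the summability of $\sum_{j\ge 1}j^{-\alpha}$, which is precisely where the hypothesis $\alpha>1$ is used.
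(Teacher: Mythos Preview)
Your proof is correct and follows essentially the same route as the paper: the lower bound via the single configuration concentrating all mass at a site of $S_\star$, and the upper bound by induction on $\kappa$ through the convolution identity obtained by conditioning on the occupancy of one site. The only cosmetic differences are that you start the induction at $\kappa=1$ rather than $\kappa=2$, and you spell out the bound on $N^\alpha\sum_{j=0}^{N-1}a(j)^{-1}(N-j)^{-\alpha}$ explicitly via the symmetry $j\leftrightarrow N-j$, whereas the paper simply asserts that the identity ``permits to extend'' the bound.
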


\begin{proof}
Choose $x$ in $S_\star$ and denote by $\xi$ the
configuration in $E_N$ such that $\xi(x)=N$, $\xi(y)=0$ for $y\not
=x$.  By definition, $m_\star(x) =1$ so that $Z_{N,S} \ge
N^\alpha  m_\star^{\xi} / a(\xi) = 1$, which
proves the lower bound.

We proceed by induction to prove the upper bound. The estimate clearly
holds for $\kappa=2$. Assume that it is in force for $2\le \kappa
<k$. The identity
\begin{equation*}
\tilde Z_{N,k} \;=\; N^\alpha \Big\{ \frac 1{N^\alpha} 
\;+\; \sum_{j=0}^{N-1} \frac {\tilde Z_{N-j,k-1}}
{a(j) a(N-j)} \Big\}
\end{equation*}
permits to extend it to $\kappa =k$.
\end{proof}

For any $\ell\ge 1$, let $E_{N,S}(\ell)$ be the subset of
$E_{N,S}$ of all configurations with at most $N-\ell$ particles per
site:
$$
E_{N,S}(\ell) \;=\;\big\{ \eta \in E_{N,S} : \eta_x \le
N-\ell\,,\, \forall x\in S \big\}\;.
$$
Next lemma shows that the measure $\mu_N$ is concentrated on
configurations in which all particles but a finite number accumulate
at one site.

\begin{lemma}
\label{s01}
There exists a constant $C_{\kappa}>0$ which
only depends on $\alpha$ and $\kappa$, such that for every integer
$\ell >0$,
$$
\sup_{N > \ell} \Big\{ N^{\alpha} \sum_{\eta\in E_{N,S}(\ell)} 
\frac{1}{a(\eta)}\Big\} \;\le\;
\frac{C_{\kappa}}{\ell^{\alpha -1}}\;\cdot
$$
\end{lemma}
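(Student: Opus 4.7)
The plan is to exploit the pigeonhole observation that any configuration $\eta \in E_{N,S}(\ell)$ must carry at least $\lceil N/\kappa\rceil$ particles on some site $x^{*}$, which by the defining constraint of $E_{N,S}(\ell)$ is moreover bounded above by $N-\ell$. Freezing the occupation of such a heavy site reduces the $\kappa$-dimensional sum to a one-dimensional sum on which the tail bound $\sum_{M\ge m}M^{-\alpha}\le C/m^{\alpha-1}$ (valid because $\alpha>1$) does all the work. No induction on $\kappa$ will be needed.

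Concretely, I would first use the pigeonhole inclusion
$$
E_{N,S}(\ell)\;\subseteq\;\bigcup_{x^{*}\in S}\{\eta\in E_{N,S}(\ell):\eta_{x^{*}}\ge N/\kappa\}
$$
(with harmless overlaps), freeze $M=\eta_{x^{*}}$, and drop the constraints $\bar\eta_{y}\le N-\ell$ on the remaining coordinates to obtain
$$
\sum_{\eta\in E_{N,S}(\ell)}\frac{1}{a(\eta)}\;\le\;\sum_{x^{*}\in S}\sum_{M=\lceil N/\kappa\rceil}^{N-\ell}\frac{1}{M^{\alpha}}\sum_{\bar\eta\in E_{N-M,\,S\setminus\{x^{*}\}}}\frac{1}{a(\bar\eta)}.
$$
For $\kappa\ge 3$, Lemma~\ref{s02} applied on the $\kappa-1\ge 2$ remaining sites bounds the innermost sum by $A_{\kappa-1}/(N-M)^{\alpha}$; for $\kappa=2$ the innermost sum is trivially $1/(N-M)^{\alpha}$. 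This is legitimate because $N-M\ge\ell\ge 1$.

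This reduces the statement to estimating
$$
N^{\alpha}\sum_{M=\lceil N/\kappa\rceil}^{N-\ell}\frac{1}{M^{\alpha}(N-M)^{\alpha}},
$$
which I would split at $M=\lfloor N/2\rfloor$. On $\{M\le N/2\}$ the factor $(N-M)^{\alpha}\ge(N/2)^{\alpha}$ absorbs $N^{\alpha}$ into $2^{\alpha}$, leaving $\sum_{M\ge N/\kappa}M^{-\alpha}\le C_{1}\kappa^{\alpha-1}/N^{\alpha-1}$. On $\{M\ge N/2\}$ the factor $M^{\alpha}$ plays the symmetric role and, after the substitution $k=N-M$, yields $\sum_{k=\ell}^{\lfloor N/2\rfloor}k^{-\alpha}\le C_{2}/\ell^{\alpha-1}$. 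Since $\ell<N$ and $\alpha>1$, the first contribution is itself bounded by $C_{1}\kappa^{\alpha-1}/\ell^{\alpha-1}$, and collecting everything gives the required bound with a constant $C_{\kappa}$ depending only on $\alpha$ and $\kappa$.

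The only delicate point is the pigeonhole lower bound $M\ge N/\kappa$: without it, the contribution of the small-$M$ range would be of order $1$ rather than $\ell^{-(\alpha-1)}$, and the required decay in $\ell$ would be lost. This is the step where the defining constraint of $E_{N,S}(\ell)$ is genuinely exploited, since it forces the mass to be spread out just enough that the heaviest site is still $\Omega(N)$ heavy.
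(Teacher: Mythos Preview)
Your proof is correct and takes a genuinely different route from the paper's. The paper proceeds by induction on $\kappa$: it fixes a single site $x$, decomposes the sum according to $i=\eta_x$, splits at $i=\ell/2$, and on the range $i\le\ell/2$ invokes the induction hypothesis on the remaining $\kappa-1$ sites (which still satisfy a constraint with parameter $\ell-i\ge\ell/2$), while on the range $\ell/2<i\le N-\ell$ it combines Lemma~\ref{s02} with the already established $\kappa=2$ case. Your argument instead uses the pigeonhole observation $\max_x\eta_x\ge N/\kappa$ to locate a heavy site, applies Lemma~\ref{s02} once to the remaining $\kappa-1$ sites, and reduces everything to a single one--dimensional sum $\sum_{N/\kappa\le M\le N-\ell}M^{-\alpha}(N-M)^{-\alpha}$ that is handled by an elementary split at $N/2$. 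What you gain is that no induction is needed and the $\kappa=2$ case requires no separate verification; what the paper's approach buys is a slightly more mechanical recursion in which the constraint $\eta_y\le N-\ell$ is propagated explicitly, making the dependence of the constant on $\kappa$ easier to track step by step.
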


\begin{proof}
We proceed by induction on $\kappa$. For $\kappa=2$ the statement is
easily checked. Now, suppose the claim holds for $2 \le \kappa \le
k-1$. Fix some $x$ in $S$. The left hand side of the inequality in the
statement can be written as
$$
\sum_{\eta\in E_{N,S }(\ell)} \frac{N^{\alpha}}{a(\eta_x)
  a(N-\eta_x)} \, \frac{\big(N-\eta_x\big)^{\alpha}}
{\prod_{y\not = x} a(\eta_y)}\; \cdot
$$
This sum is equal to
\begin{equation}
\label{f02}
\Big\{ \,\sum_{0\le i \le \ell/2} + \sum_{\ell/2 < i \le N-\ell}\,
\Big\} \frac{N^{\alpha}}{a(i)a(N-i)} 
\sum_{\xi\in E_{N-i,S\setminus\{x\}}(\ell -i)} \frac{(N-i)^{\alpha}}{a(\xi)}\;,
\end{equation}
where the second sum is equal to zero if $\{i : \ell/2 < i \le
N-\ell\}$ is empty. We examine the two terms of this expression
separately. By the induction assumption, the first sum is bounded
above by
\begin{equation*}
\sum_{i=0}^{\ell/2}\frac{N^{\alpha}}{a(i)a(N-i)} 
\, \frac{C_{k-1}}{(\ell - i)^{\alpha-1}}\;\cdot
\end{equation*}
By the previous lemma, this sum is less than or equal to
$$
\frac{2^{\alpha-1}C_{k-1}}{\ell^{\alpha-1}} 
\sum_{i=0}^{\ell/2} \frac{N^{\alpha}}{a(i)a(N-i)} 
\; \le \; \frac{2^{\alpha-1}C_{k-1} \tilde Z_{N,2}}{\ell^{\alpha-1}}
\; \le \; \frac{2^{\alpha-1}C_{k-1} A_2}{\ell^{\alpha-1}}\;\cdot
$$
On the other hand, by Lemma \ref{s02} and the induction assumption for
$\kappa=2$, the second term in \eqref{f02} is less than or equal to
$$
\sum_{\ell/2 < i \le N-\ell} \frac{N^{\alpha}}{a(i)a(N-i)}
\tilde Z_{N-i,k-1} \;\le\; A_{k-1} \,C_2 \, (2/\ell)^{\alpha-1} \;\cdot
$$
This concludes the proof of the lemma.
\end{proof}

For $N\ge 2$, $0\le \ell \le N$, $x\in S$, denote by $ E^{x,\ell}_{N}$
the set of configurations in $E_{N, S}$ with at least $N-\ell$
particles at site $x$:
\begin{equation*}
E^{x,\ell}_{N} \;=\; \{\eta\in E_{N,S} : \eta(x)\ge N-\ell
\}\; .
\end{equation*}
Recall the definition of the set $S_\star$.  Next lemma shows that the
$\mu_N$--measure of the set $E^{x,\ell}_{N}$ decays exponentially if
$x$ does not belong to $S_\star$.

\begin{lemma}
\label{s03}
For each $\kappa\ge 2$, there exists a finite constant $C_\kappa$,
depending only on $\kappa$ and $\alpha$, such that 
$$
N^{\alpha} \sum_{\eta\in E_{N}^{x,\ell}} 
\frac{m_\star^\eta}{a(\eta)} \; \le \; C_{\kappa} \, m_\star(x)^{N-\ell}
$$
for all $N>\ell$.
\end{lemma}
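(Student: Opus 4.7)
The plan is to stratify the sum over $\eta \in E_N^{x,\ell}$ according to the value of $\eta_x$. Writing $\eta_x = N-k$ with $0\le k \le \ell$ and letting $\xi$ denote the restriction of $\eta$ to $S\setminus\{x\}$, we have $\xi \in E_{k, S\setminus\{x\}}$ and
$$
\frac{m_\star^\eta}{a(\eta)} \;=\; \frac{m_\star(x)^{N-k}}{a(N-k)} \cdot \frac{m_\star^\xi}{a(\xi)}\;,
$$
so that
$$
N^\alpha \sum_{\eta\in E_N^{x,\ell}} \frac{m_\star^\eta}{a(\eta)}
\;=\; \sum_{k=0}^{\ell} \frac{N^\alpha\, m_\star(x)^{N-k}}{a(N-k)}
\sum_{\xi\in E_{k, S\setminus\{x\}}} \frac{m_\star^\xi}{a(\xi)}\;.
$$

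Next I would bound the inner sum uniformly. For $k=0$ it equals $1$. For $k\ge 1$, Lemma \ref{s02} applied to the set $S\setminus\{x\}$ (using $m_\star \le 1$ to pass to $\tilde Z$) gives
$$
\sum_{\xi\in E_{k, S\setminus\{x\}}} \frac{m_\star^\xi}{a(\xi)}
\;\le\; \frac{\tilde Z_{k,\kappa-1}}{k^\alpha} \;\le\; \frac{A_{\kappa-1}}{k^\alpha}\;,
$$
when $\kappa\ge 3$; the case $\kappa=2$ (a single remaining site) is handled directly since the inner sum reduces to $m_\star(y)^k/a(k)\le 1/k^\alpha$. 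In either case the bound $A'_{\kappa-1}/k^\alpha$ is valid for a suitable constant depending only on $\kappa$.

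Since $m_\star(x) \le 1$ and $k\le \ell$, we have $m_\star(x)^{N-k} \le m_\star(x)^{N-\ell}$, so factoring out $m_\star(x)^{N-\ell}$ reduces the problem to establishing
$$
1 \;+\; A'_{\kappa-1} \sum_{k=1}^{\ell} \frac{N^\alpha}{(N-k)^\alpha k^\alpha} \;\le\; C_\kappa
$$
uniformly in $N>\ell$. I would bound the remaining sum by splitting at $k = \lfloor N/2\rfloor$: on the range $1\le k\le N/2$ we have $N/(N-k)\le 2$, leaving $\sum_k k^{-\alpha}\le\sum_{k\ge 1}k^{-\alpha}<\infty$ since $\alpha>1$; on the range $N/2 < k \le \ell$ we have $N/k\le 2$, and the remaining sum $\sum_{k} (N-k)^{-\alpha}$, reindexed as $\sum_{j=N-\ell}^{\lceil N/2\rceil} j^{-\alpha}$, is again bounded by $\sum_{j\ge 1} j^{-\alpha}<\infty$. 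Both tails are bounded independently of $N$ and $\ell$, which concludes the argument.

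The only real technical point is this last uniform estimate, and the crucial feature that makes it work is $\alpha>1$; everything else is a direct unpacking of the definitions combined with Lemma \ref{s02}. No obstacle of substance is expected.
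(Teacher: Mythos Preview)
Your proof is correct and follows essentially the same route as the paper: stratify by $\eta_x$, bound the inner sum over $E_{k,S\setminus\{x\}}$ via Lemma~\ref{s02} (using $m_\star\le 1$), pull out the factor $m_\star(x)^{N-\ell}$, and then control $\sum_{k=1}^{\ell} N^\alpha/\big((N-k)^\alpha k^\alpha\big)$ uniformly. The only cosmetic difference is that the paper handles this last sum by recognizing it as bounded by $\tilde Z_{N,2}\le A_2$ (a second invocation of Lemma~\ref{s02}), whereas you reprove that bound by hand via the split at $k=N/2$.
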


\begin{proof}
Fix $\kappa\ge 2$ and $x$ in $S$.  The expression on the left hand
side of the statement of the lemma is bounded by
\begin{equation*}
m_\star(x)^N \;+\; N^{\alpha} \sum_{i=N-\ell}^{N-1} 
\frac{m_\star(x)^{i}}{i^\alpha} 
\sum_{\xi \in E_{N-i,S\setminus\{x\}}} \frac{1}{a(\xi)}\;\cdot
\end{equation*}
By Lemma \ref{s02} and since $m_\star(x) \le 1$, the second term is
less than or equal to
\begin{equation*}
N^{\alpha} \sum_{i=N-\ell}^{N-1} 
\frac{m_\star(x)^{i}}{i^\alpha (N-i)^\alpha} \tilde Z_{N-i,\kappa-1} 
\;\le\; A_2 \, A_{\kappa-1} \, m_\star(x)^{N-\ell}\;,
\end{equation*}
which concludes the proof of the lemma.
\end{proof}

If $\ell < N/2$, the sets $\{E^{x,\ell}_{N} : x\in S\}$ are
pairwise disjoint and
$$
E_{N,S}\setminus \bigcup_{x\in S} E^{x,\ell}_{N}
\;=\; E_{N,S}(\ell+1)\;.
$$
It follows from the two previous lemmata that the sum in the
definition of $Z_{N,S}$ restricted to configurations on $
E^{x,\ell}_{N}$, $x\in S_\star$, is close to $\kappa_\star^{-1}
Z_{N,S}$ for $\ell$ (and consequently $N$) large.

\begin{lemma}
\label{esti1}
For each $\kappa\ge 2$, there exists a constant $C_{\kappa}>0$, which
only depends on $\alpha$ and $\kappa$, such that for every integer
$\ell >0$ and $x\in S_\star$,
$$
\sup_{N > 2\ell } \; \Big| N^{\alpha} \sum_{\eta\in E_{N}^{x,\ell}} 
\frac{m_\star^\eta}{a(\eta)} - \frac{Z_{N,S}}{\kappa_\star} \Big| 
\; \le \; \frac{C_{\kappa}}{\ell^{\alpha-1}}\;\cdot
$$
\end{lemma}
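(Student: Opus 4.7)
The plan is to decompose $Z_{N,S}$ according to the disjoint partition
$E_{N,S} = \bigcup_{x\in S} E_N^{x,\ell} \,\cup\, E_{N,S}(\ell+1)$, valid for $N > 2\ell$. Setting $A_x := N^\alpha \sum_{\eta \in E_N^{x,\ell}} m_\star^\eta/a(\eta)$, this decomposition gives
\begin{equation*}
Z_{N,S} \;=\; \sum_{x \in S_\star} A_x \;+\; \sum_{y\notin S_\star} A_y \;+\; N^\alpha \sum_{\eta \in E_{N,S}(\ell+1)} \frac{m_\star^\eta}{a(\eta)}\; .
\end{equation*}
The target bound will follow by showing that the first sum equals $\kappa_\star A_x$ for any fixed $x \in S_\star$, while the remaining two sums are of order $1/\ell^{\alpha-1}$.

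The key observation is a symmetry: for $x, y \in S_\star$, the involution $T_{x,y}$ on $E_N$ which swaps the occupations $\eta_x$ and $\eta_y$ restricts to a bijection $E_N^{x,\ell} \leftrightarrow E_N^{y,\ell}$ and preserves the weight $m_\star^\eta/a(\eta)$, because $m_\star(x) = m_\star(y) = 1$ and $a(\eta)$ depends only on the multiset of coordinates. Consequently $A_x = A_y$ for all $x,y \in S_\star$, so $\sum_{x \in S_\star} A_x = \kappa_\star A_x$.

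The two remainder terms are now controlled by the preceding lemmas. Lemma \ref{s01} immediately gives $N^\alpha \sum_{\eta \in E_{N,S}(\ell+1)} m_\star^\eta/a(\eta) \le C_\kappa/\ell^{\alpha-1}$ since $m_\star^\eta \le 1$. For $y \notin S_\star$, Lemma \ref{s03} yields $A_y \le C_\kappa \, m_\star(y)^{N-\ell}$, and the hypothesis $N > 2\ell$ together with $m_\star(y) < 1$ gives $m_\star(y)^{N-\ell} \le m_\star(y)^{\ell}$. Since exponential decay beats any polynomial, there is a constant depending only on $\alpha$ and $\max_{y\notin S_\star}m_\star(y) < 1$ such that $m_\star(y)^\ell \le \mathrm{const}/\ell^{\alpha-1}$ uniformly in $\ell \ge 1$, and summing over the finitely many $y \notin S_\star$ produces a bound of the same order.

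Combining these estimates, $Z_{N,S}/\kappa_\star = A_x + O(1/\ell^{\alpha-1})$, which is exactly the statement of the lemma. I do not foresee a genuine obstacle: the argument rests on spotting the elementary involution that forces the $A_x$'s to coincide on $S_\star$, and everything else is an application of Lemmata \ref{s01} and \ref{s03}. The only mildly delicate point is making the exponential-beats-polynomial bound uniform in $\ell$, which is immediate once one notices that the constant can be taken uniform over the finite set $S \setminus S_\star$.
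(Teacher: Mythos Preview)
Your proof is correct and follows essentially the same route as the paper: decompose $Z_{N,S}$ according to the partition $E_{N,S}=\bigcup_{x\in S}E_N^{x,\ell}\cup E_{N,S}(\ell+1)$, use the symmetry among sites of $S_\star$ to identify the $A_x$'s, and control the two remainder terms by Lemmata~\ref{s01} and~\ref{s03}. The only addition you make explicit (and the paper leaves tacit) is the exponential-versus-polynomial comparison turning $m_\star(y)^{N-\ell}$ into an $O(\ell^{-(\alpha-1)})$ bound; note that the resulting constant then also depends on $\max_{y\notin S_\star}m_\star(y)$, which the paper's phrasing ``depends only on $\alpha$ and $\kappa$'' glosses over.
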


\begin{proof}
As we have observed, for $0 < \ell < N/2$,
\begin{equation*}
Z_{N,S} \;=\; N^{\alpha} \sum_{x\in S} \sum_{\eta\in
E_{N}^{x,\ell}} \frac{m_\star^\eta}{a(\eta)}  
\;+\; N^{\alpha} \sum_{\eta\in E_{N,S}(\ell +1)} 
\frac{m_\star^\eta}{a(\eta)}  \;\cdot
\end{equation*}
By symmetry, for $x$, $y$ in $S_\star$,
\begin{equation*}
\sum_{\eta\in E_{N}^{x,\ell}} \frac{m_\star^\eta}{a(\eta)}
\;=\; \sum_{\eta\in E_{N}^{y,\ell}}
\frac{m_\star^\eta}{a(\eta)}\; \cdot
\end{equation*}
Hence, if $x$ belongs to $S_\star$,
\begin{equation*}
Z_{N,S} \;=\; \kappa_\star N^{\alpha} \sum_{\eta\in
E_{N}^{x,\ell}} \frac{m_\star^\eta}{a(\eta)} 
\;+\; N^{\alpha} \sum_{y\not \in S_\star} \sum_{\eta\in
E_{N}^{y,\ell}} \frac{m_\star^\eta}{a(\eta)}
\;+\; N^{\alpha} \sum_{\eta\in E_{N,S}(\ell+1)} 
\frac{m_\star^\eta}{a(\eta)}  \;\cdot
\end{equation*}
The statement now follows from the two previous lemmata.
\end{proof}

We are now in a position to prove the main result of this section.

\begin{proof}[Proof of Proposition \ref{zk}]
Fix a site $x$ in $S_\star$. By the previous lemma,
\begin{equation*}
\lim_{N\to\infty} \kappa_\star^{-1} Z_{N,S} \;=\; 
\lim_{\ell \to\infty}\lim_{N\to\infty} N^{\alpha} 
\sum_{\eta\in  E_{N}^{x,\ell}} \frac{m_\star^\eta}{a(\eta)} \;\cdot
\end{equation*}
Since $x$ belongs to $S_\star$, the previous sum is equal to
\begin{equation*}
\sum_{j= 0}^\ell \frac{N^{\alpha}}{(N-j)^\alpha}
\sum_{\xi \in E_{j, S\setminus\{x\}}} \frac{m_\star^\xi}{a(\xi)}\; ,
\end{equation*}
As $N\uparrow\infty$ and $\ell\uparrow\infty$,
this expression converges to
\begin{equation*}
\sum_{j\ge 0} \sum_{\xi \in E_{j, S\setminus\{x\}}} \frac{m_\star^\xi}{a(\xi)}\; 
=\; \prod_{y\not = x} \sum_{j\ge 0} \frac{m_\star(y)^j}{a(j)}\;
=\; \prod_{y\not = x} \Gamma_y \; .
\end{equation*}
This concludes the proof of the proposition.
\end{proof}

We close this section showing that
\begin{equation}
\label{delta}
\lim_{N\to\infty} \mu_N(\Delta_N) \;=\; 0\;.
\end{equation}
Recall the definition of the set $S_\star$ and of the sets $\ms
E^x_N$, $x\in S_\star$. Since
\begin{equation*}
\Delta_N \; = \; \Big[ \bigcup_{z\in S\setminus S_\star} 
\big\{\eta : \eta_z > b_N(z) \big\}  \Big] \; \bigcup\;  
\Big[ \bigcap_{x\in S_\star}
\big\{\eta : \eta_x < N - \ell_N \big\} \Big] \;,
\end{equation*}
intersecting the second set with the partition $A= \cap_{z\in
  S\setminus S_\star} \big\{\eta : \eta_z < N - \ell_N \big\}$ and
$A^c$, we get that
\begin{equation*}
\Delta_N \; \subset \; \bigcup_{z\in S\setminus S_\star} 
E^{z,c_N}_N  \; \bigcup\; E_{N,S}(\ell_N + 1) \;,
\end{equation*}
where $c_N = \min\{ \ell_N , N-b_N(z) : z\in S\setminus
S_\star\}$. Hence, assertion \eqref{delta} follows from Lemma
\ref{s01}, assumption (\ref{f18}) and Lemma \ref{s03}.

\section{Lower bound}
\label{sec3}

In this section we prove a lower bound for the capacity. It might be
simpler in a first reading to assume that $m$ is constant so that
$S=S_\star$.

For $b,\ell\ge 3$ and $x$, $y$ in $S_\star$, $x\not =y$, consider the tube
${L}^{x,y}_{N}$ defined by
\begin{equation*}
{L}^{x,y}_{N} \;=\; \Big\{\, \eta\in E_N : \eta_x + \eta_y \ge N -
\ell \; ; \; \eta_z \le b \; , \;z\in S\setminus S_\star \,\Big\}\;.
\end{equation*}
Clearly, $L^{x,y}_N=L^{y,
x}_N$ for any $x,y\in S_\star$. We claim that
for each $x\in S_\star$ and every $N$ sufficiently large
\begin{equation}
\label{f12}
L^{x,y}_N \cap L^{x,z}_N \;\subset\; \mce{x}\;, 
\quad y,z\in S_\star\setminus\{x\}\;.
\end{equation}
Indeed, let $\eta\in L^{x,y}_N \cap L^{x,z}_N$. First, $b\le
\inf_{z\in S\setminus S_\star}b_N(z)$ for any $N$ sufficiently large
in view of \eqref{f18}. On the other hand, $\eta_z \le \ell$ because
$\eta$ belongs to $ L^{x,y}_N$. Hence, $\eta_x \ge N-2\ell$ since
$\eta$ belongs to $ L^{x,z}_N$. Since $\ell_N\to \infty$, this shows
that $\eta_x\ge N - \ell_N$, for $N$ large enough and we conclude that
$\eta\in \mce x$. Moreover, it follows from this argument that, for
$N$ sufficiently large,
\begin{equation}
\label{f13}
 L^{x,y}_N \cap L_N^{z,w}\not = \varnothing \quad 
\textrm{if and only if} \quad\{x,y\}\cap\{z,w\}\not =\varnothing \;. 
\end{equation}

\begin{proposition}
\label{s05} 
Assume that $\kappa_\star\ge 2$. Fix a nonempty subset $S_\star^1
\subsetneq S_\star$ and denote $S_\star^2= S_\star\setminus
S_\star^1$. Then,
\begin{equation*}
\liminf_{N\to\infty} N^{1+\alpha} \Cap_N\big(\ms E_N(S_\star^1), \ms
E_N({S_\star^2})\big) \;\ge\; \frac 1{M_\star \, \kappa_\star \,
  \Gamma(\alpha) \, I_\alpha} 
\sum_{x \in S_\star^1, y\in S_\star^2} \Cap_S(x,y) \; .
\end{equation*}
\end{proposition}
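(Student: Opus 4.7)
The plan is to invoke the Dirichlet principle and produce a matching lower bound on $D_N(F)$ for an arbitrary $F \in \mathcal C_N(\ms E_N(S_\star^1), \ms E_N(S_\star^2))$. For each pair $(x,y) \in S_\star^1 \times S_\star^2$ I retain only a family of bonds lying inside the tube $L^{x,y}_N$; thanks to \eqref{f13} and suitable range restrictions on $k$ introduced below, these families will be pairwise disjoint, so the per-tube estimates sum cleanly.

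Within a tube, parametrize each configuration by a triple $(k,\xi,\zeta)$ where $k = \eta_x$, $\xi = \eta|_{S_\star \setminus \{x,y\}}$, $\zeta = \eta|_{S\setminus S_\star}$, and $\eta_y = N - k - |\xi| - |\zeta|$ is then determined. For each such triple with $\zeta_z \le b_N(z)$ and $|\xi| + |\zeta|$ small compared to $\ell_N$, form the ``star'' $\{\eta^{k,\xi,\zeta,z} : z \in S\}$ whose central configuration is the one described by the triple and whose other $|S|-1$ members are obtained by applying the moves $\sigma^{xz}$. The key bookkeeping, based on $g(j) = a(j)/a(j-1)$, reversibility $m(u)r(u,v)=m(v)r(v,u)$, and $m_\star = m/M_\star$, shows that every zero-range bond inside the star carries the common prefactor
\begin{equation*}
C_N(k,\xi,\zeta) \;:=\; \frac{N^\alpha \, m_\star^\zeta}{Z_{N,S}\, M_\star\, a(k-1)\, a(N-k-|\xi|-|\zeta|)\, a(\xi)\, a(\zeta)}
\end{equation*}
multiplied by $m(u)r(u,v)$. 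Consequently the total contribution of the star to $D_N(F)$ equals $C_N(k,\xi,\zeta)\, D_S(\phi^{k,\xi,\zeta})$, where $\phi^{k,\xi,\zeta}(z) := F(\eta^{k,\xi,\zeta,z})$; observing $\eta^{k,\xi,\zeta,y} = \eta^{k-1,\xi,\zeta,x}$, definition \eqref{ff1} gives
\begin{equation*}
D_S(\phi^{k,\xi,\zeta}) \;\ge\; \Cap_S(x,y) \bigl[F(\eta^{k,\xi,\zeta,x}) - F(\eta^{k-1,\xi,\zeta,x})\bigr]^2.
\end{equation*}

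Next I sum over $k$ in the interior range $\ell_N < k \le N - \ell_N$, inside which the stars are disjoint across tubes. The endpoint values $F(\eta^{N-\ell_N,\xi,\zeta,x}) = 1$ and $F(\eta^{\ell_N - |\xi| - |\zeta|,\xi,\zeta,x}) = 0$ allow a Cauchy-Schwarz step reducing the $k$-sum to $1/\sum_k a(k-1) a(N-k-|\xi|-|\zeta|)$, with denominator asymptotic to $N^{2\alpha+1} I_\alpha$ uniformly in $|\xi| + |\zeta| = o(N)$. Summing then over admissible $(\xi,\zeta)$, the identities $\sum_\xi 1/a(\xi) \to \Gamma(\alpha)^{\kappa_\star - 2}$ and $\sum_\zeta m_\star^\zeta/a(\zeta) \to \prod_{z \notin S_\star} \Gamma_z$ combine with Proposition \ref{zk} to produce the factor $1/(\kappa_\star \Gamma(\alpha))$ from $Z_S = \kappa_\star \Gamma(\alpha)^{\kappa_\star - 1} \prod_{z \notin S_\star} \Gamma_z$. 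Thus the per-tube lower bound is $\Cap_S(x,y)/(M_\star \kappa_\star \Gamma(\alpha) I_\alpha \cdot N^{1+\alpha})$ asymptotically, and summing over $(x,y) \in S_\star^1 \times S_\star^2$ yields the proposition.

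The main obstacle is the combinatorial bookkeeping that makes the stars genuinely disjoint, so that adding up the per-star contributions is really a valid lower bound on $D_N(F)$ and not an overcount. Within a single tube, the identity $\eta^{k,\xi,\zeta,y} = \eta^{k-1,\xi,\zeta,x}$ produces shared configurations between consecutive stars, but the corresponding bonds remain distinct. Across two tubes sharing a site $x$, \eqref{f12} confines any possible overlap to $\ms E^x_N$; restricting $k \le N - \ell_N$ pushes both endpoints of every star bond out of $\ms E^x_N$, providing the needed disjointness. A minor but routine technical point is proving the Riemann-sum asymptotic $\sum_k a(k-1) a(N-k-|\xi|-|\zeta|) \sim N^{2\alpha+1} I_\alpha$ uniformly in small perturbations of the endpoints.
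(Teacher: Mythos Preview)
Your approach is essentially the same as the paper's. Your ``star'' decomposition indexed by $(k,\xi,\zeta)$ is precisely the paper's change of variables $\bar\xi = \eta - \mf d_z \in E_{N-1}$ (your star is $\{\bar\xi + \mf d_z : z \in S\}$ for fixed $\bar\xi$), and both proofs then apply the $\Cap_S(x,y)$ bound to the $S$-indexed function $z \mapsto F(\bar\xi + \mf d_z)$, telescope along the $\eta_x$-coordinate, and invoke Cauchy--Schwarz for the resulting one-dimensional sum.

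There is one small inconsistency in your bookkeeping. You state that the telescoping endpoints are at $k = N-\ell_N$ (where $F=1$) and $k = \ell_N - |\xi| - |\zeta|$ (where $F=0$), but you then restrict the star sum to $\ell_N < k \le N-\ell_N$ for cross-tube disjointness. If $|\xi|+|\zeta|>0$, the lower end of your summation range does not reach a configuration with $F=0$, so Cauchy--Schwarz yields only $\bigl(1 - F(\eta^{\ell_N,\xi,\zeta,x})\bigr)^2$ in the numerator rather than $1$, and you have no control over this quantity. The paper sidesteps this entirely by not demanding bond-disjointness across tubes: using \eqref{f12}--\eqref{f13}, any configuration $\eta$ in two tubes $L^{x,y}_N \cap L^{x,y'}_N$ with $x\in S_\star^1$, $y,y'\in S_\star^2$ satisfies $\eta_x \ge N-2\ell$, so for $N$ large both $\eta$ and $\sigma^{zw}\eta$ lie in $\ms E^x_N$, where $F$ is constant; hence every double-counted term vanishes. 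With this observation you may sum $k$ over the full range down to $\ell_N - |\xi| - |\zeta|$ without restriction, and the remainder of your argument goes through verbatim. (Note also that the paper introduces the tube parameters $\ell,b$ as fixed integers sent to infinity only after $N\to\infty$, rather than working directly with $\ell_N, b_N(z)$; this makes the ``zero overlap'' argument and the Riemann-sum asymptotic slightly cleaner.)
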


\begin{proof}
Fix a function $F$ in $\mc C_N(\ms E_N(S_\star^1), \ms
E_N(S_\star^2))$. By definition,
\begin{equation*}
D_N(F) \;=\; \frac 12 \sum_{z,w \in S} \sum_{\eta\in E_N} \mu_N(\eta)
\, r(z,w) \, g(\eta_z) \, \{ F(\sigma^{zw}\eta) - F(\eta)\}^2 \;.
\end{equation*}
We may bound from below the Dirichlet form $D_N(F)$ by
\begin{equation*}
\frac 12  \sum_{x\in S_\star^1} \sum_{y \in S_\star^2} \sum_{z,w \in S}  
\sum_{\eta\in L^{x,y}_N} \mu_N(\eta)
\, r(z,w) \, g(\eta_z) \, \{ F(\sigma^{zw}\eta) - F(\eta)\}^2\;.
\end{equation*}
In this inequality, we are neglecting several terms corresponding to
configurations $\eta$ which do not belong to $\cup_{x \in S_\star^1,
  y\in S_\star^2} L^{x,y}_N$. On the other hand, some configurations
are counted more than once because the sets $\{ L^{x,y}_N : x\in
S_\star^1 ,y \in S_\star^2\}$ are not disjoints. However, by
\eqref{f13}, if $ L^{x,y}_N$ and $ L^{x',y'}_N$ are different strips
and $\eta$ belongs to $ L^{x,y}_N \cap L^{x',y'}_N$ then $x= x'$ and
$y\not = y'$ (recall that $ L^{x,y}_N= L^{y,x}_N$). In consequence,
$\eta_x \ge N-2\ell$. In particular, for $N$ large enough, $\eta$ and
$\sigma^{zw}\eta$ belong to $\ms E^x_N$ for all $z,w\in S$, so that
$F(\sigma^{zw}\eta)= F(\eta)$ because $F$ is constant on $\ms E^x_N$.

The proof of the lower bound has two steps. We first use the
underlying random walk to estimate the Dirichlet form $D_N(F)$ by the
capacity of this random walk multiplied by the Dirichlet form of a zero
range process on two sites. This remaining Dirichlet form is easily
bounded by explicit computations.

Fix $x\in S_\star^1$, $y \in S_\star^2$. Denote by $\mf d_x$, $x\in
S$, the configuration with one and only one particle at $x$, and
agree that summation of configurations is performed
componentwise. The change of variables $\xi = \eta - \mf d_z$ shows
that
\begin{equation*}
\begin{split}
& \frac 12 \, \sum_{z,w \in S} \sum_{\eta\in
  L^{x,y}_N} \mu_N(\eta)
\, r(z,w) \, g(\eta_z) \, \{ F(\sigma^{zw}\eta) - F(\eta)\}^2 \\
& \qquad \;=\; \frac 12 \, \sum_{z,w \in S} \sum_{\substack{\xi \in E_{N-1} \\ 
\xi + \mf d_z \in  L^{x,y}_N}} \frac{N^\alpha}{Z_{N,S}} \, \frac
{m_\star^\xi}{a(\xi)}\, m_\star (z)
\, r(z,w) \, \{ F(\xi + \mf d_w) - F(\xi + \mf
d_z)\}^2 \;.
\end{split}
\end{equation*}
This sum is clearly bounded below by
\begin{equation*}
\frac 12 \, \sum_{z,w \in S} \sum_{\substack{\xi \in E_{N-1} \\ 
\xi_x + \xi_y \ge N - \ell \\ \xi_z \le b-1, \forall z\in S\setminus
S_\star}} \frac{N^\alpha}{Z_{N,S}} \, 
\frac {m_\star^\xi} {a(\xi)} \, m_\star (z)
\, r(z,w) \, \{ F(\xi + \mf d_w) - F(\xi + \mf d_z)\}^2 \;.
\end{equation*}
Fix a configuration $\xi$ in $E_{N-1}$ and consider the function
$f:S\to \bb R$ given by $f(v) = \{F(\xi + \mf d_v) - F(\xi + \mf
d_{y})\}/\{F(\xi + \mf d_{x}) - F(\xi + \mf d_{y})\}$. Note that
$f(x)=1$, $f(y)=0$. Moreover, if we recall the expression
\eqref{f14} of the Dirichlet form of the underlying random walk,
\begin{equation*}
\begin{split}
& \frac 12 \, \sum_{z,w \in S} m_\star (z)
\, r(z,w) \, \{ F(\xi + \mf d_w) - F(\xi + \mf d_z)\}^2 \\
&\qquad\qquad \;=\; \frac 1{M_\star}\, D_S(f) \, \{ F(\xi + \mf d_{x}) - 
F(\xi + \mf d_{y})\}^2\;.   
\end{split}
\end{equation*}
Since $f(x)=1$, $f(y)=0$, the previous expression is bounded below
by
\begin{equation*}
\frac 1{M_\star}\, \Cap_S(x,y) \, \{ F(\xi + \mf d_{x}) 
- F(\xi + \mf d_{y})\}^2 \;.
\end{equation*}

Up to this point we proved that the Dirichlet form of $F$ is bounded
below by 
\begin{equation*}
\frac 1{M_\star}\, \sum_{x \in S_\star^1, y\in S_\star^2}
\Cap_S(x,y) \, \sum_{\substack{\xi \in E_{N-1} \\ 
\xi_x + \xi_y \ge N - \ell \\ \xi_z \le b-1, \forall z\in S\setminus
S_\star}} \frac{N^\alpha}{Z_{N,S}} \, 
\frac {m_\star^\xi} {a(\xi)}  \, \{ F(\xi + \mf d_{x}) 
- F(\xi + \mf d_{y})\}^2 \;.
\end{equation*}

Fix $x_0\in S^1_\star$, $y_0\in S_\star^2$ and let $S_0 := S \setminus
\{x_0,y_0\}$. For each $k\ge 0$, let $B_k = B^{x_0,y_0}_k$ be the set
of configurations on $S_0$ given by
$$
B_k \;=\; \Big\{ \, \zeta \in \bb N^{S_0} : \sum_{v\in S_0}
\zeta_v = k \;;\; \zeta_z\le b-1 \,,\, z\in S\setminus S_\star  \,\Big\}\;.
$$
For $\zeta$ in $B_k$, let $G_\zeta : \{0, \dots, N -1 - k \} \to \bb
R$ be defined as $G_\zeta(i) = F(\xi)$, where $\xi\in E_{N-1}$ is the
configuration given by $\xi_v = \zeta_v$, $v\in S_0$, $\xi_{x_0} = i$
and $\xi_{y_0}=N-1-k-i$. With this notation, for $x_0 \in S^1_\star$,
$y_0\in S^2_\star$ fixed, we may rewrite the second sum in the
previous formula as
\begin{equation*}
\frac{N^\alpha}{Z_{N,S}} \sum_{k=0}^{\ell} 
\sum_{\zeta \in B_k  }  \, 
\frac {m_\star^\zeta} {a(\zeta)}  \sum_{i=0}^{N-2-k} 
\frac {1} {a(i) \, a(N-1-k-i)}
\, \{ G_\zeta(i+1)  - G_\zeta(i)\}^2
\end{equation*}
because $m_\star (x_0) = m_\star(y_0)=1$. Note that $G_\zeta$ is equal
to $0$ on the set $\{0, \dots, \ell_N - k \}$, and equal to $1$ on the
set $\{N-\ell_N, \dots, N -1-k\}$. We may therefore restrict the sum
over $i$ to a subset. It is easy to derive a lower bound for
\begin{equation*}
\sum_{i=\ell_N - k}^{N-\ell_N-1} \frac {1} 
{a(i) \, a(N-1-k-i)} \, \{ G_\zeta(i+1)  - G_\zeta(i)\}^2\; .
\end{equation*}
The function $G$ which minimizes this expression is given by $G(N -
\ell_N) =1$,
\begin{equation*}
G(i+1)  - G(i) \; =\; \frac{1}{K_N}\, a(i) \, a(N-1-k-i)\;,
\quad i\in \big[ \ell_N-k \,,\, N-\ell_N-1 \big]\;, 
\end{equation*}
where $K_N$ is a normalizing constant to ensure the boundary
condition $G(\ell_N-k)=0$. The respective lower bound is
\begin{equation*}
\Xi_N(x_0,y_0) \;:=\; \Big\{ \sum_{i=\ell_N - k}^{ N-\ell_N-1}
a(i) \, a(N-1-k-i) \Big\}^{-1}\;.
\end{equation*}
This expression depends on the configuration $\zeta$ only through its
number of particles. Moreover, for every fixed $k$, $N^{1+2\alpha}
\Xi_N(x_0,y_0)$ converges to $I_\alpha^{-1}$ as $N\uparrow\infty$.

In conclusion,
\begin{equation*}
N^{\alpha+1}\, D_N(F) \;\ge\; \frac 1{M_\star} \sum_{x \in S^1_\star, y\in S^2_\star} 
\Cap_S(x,y) \, \frac{N^{2\alpha+1}}{Z_{N,S}} \sum_{k=0}^{\ell} \sum_{\zeta
\in B_k} \Xi_N(x,y) \, \frac {m_\star^\zeta} {a(\zeta)}  \;\cdot
\end{equation*}
By Proposition \ref{zk} and the above conclusions, as $N\uparrow
\infty$, the right hand side converges to
$$
\frac 1{M_\star\,I_\alpha \,Z_{S}} \sum_{x \in S^1_\star, y\in S^2_\star} 
\Cap_S(x,y) \, \sum_{k=0}^{\ell} \sum_{\zeta \in B_k} 
\frac {m_\star^\zeta} {a(\zeta)}  \;\cdot
$$
Recall that $\ell$ and $b$ are free parameters introduced in the
definition of the strip $ L^{x,y}_N$. Thus, letting $b\uparrow \infty$
and then $\ell \uparrow \infty$, the second sum in the last expression
converges to
\begin{equation*}
\sum_{k\ge 0} \, \sum_{\zeta \in E_{k,S\setminus\{x,y\}}} \, \frac {m_\star^\zeta}
{a(\zeta)} \;=\;
\prod_{z\in S\setminus \{x,y\}} \,\sum_{j\ge 0} \frac {m_\star (z)^j}
{a(j)} \;=\;  
\prod_{z\in S\setminus \{x,y\}} \Gamma_z \; = \; \frac{Z_S }
{\kappa_\star \Gamma(\alpha)} \;\cdot
\end{equation*}
For the last equation we have used the explicit formula of $Z_S$
presented just before Proposition \ref{zk}. This proves the lemma.
\end{proof}

\section{Upper bound}
\label{sec4}

We prove in this section an upper bound for the capacity. As in the
previous section, it might be simpler in a first reading to assume
that $m$ is constant so that $S=S_\star$.

\begin{proposition}
\label{s08}
Assume that $\kappa_\star\ge 2$. Fix a nonempty subset $S_\star^1
\subsetneq S_\star$ and denote $S_\star^2= S_\star\setminus
S_\star^1$. Then,
\begin{equation*}
\limsup_{N\to\infty} N^{1+\alpha} \Cap_N\big(\ms E_N(S_\star^1), \ms
E_N({S_\star^2})\big) \;\le\; \frac 1{M_\star \, \kappa_\star \,
  \Gamma(\alpha) \, I_\alpha} 
\sum_{x \in S_\star^1, y\in S_\star^2} \Cap_S(x,y) \; .
\end{equation*}
\end{proposition}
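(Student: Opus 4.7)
The strategy is to exhibit a test function $F \in \mc C_N(\ms E_N(S_\star^1), \ms E_N(S_\star^2))$ whose Dirichlet form asymptotically saturates the lower bound of Proposition \ref{s05}. The optimal shape of $F$ is prescribed by the analysis of Section \ref{sec3}: on each tube $L^{x,y}_N$ with $x \in S_\star^1$, $y \in S_\star^2$, the minimizer of the one--dimensional variational problem obtained after freezing the spectator configuration is the function $\psi^{(k)}$ satisfying
\[
\psi^{(k)}(i+1) - \psi^{(k)}(i) \;=\; K_N(k)^{-1}\, a(i)\, a(N-1-k-i)\;,
\]
where $K_N(k) = \sum_{j=\ell_N - k}^{N-\ell_N-1} a(j)\, a(N-1-k-j)$ satisfies $N^{1+2\alpha}/K_N(k) \to I_\alpha^{-1}$. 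The transverse motion (shuffling of mass among sites outside the $\{x,y\}$ axis) is handled by the equilibrium potential $h_{x,y}: S \to [0,1]$ of the underlying random walk, i.e., the minimizer realising $\Cap_S(x,y)$ in \eqref{ff1}.

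I would then set $F \equiv 1$ on $\ms E_N(S_\star^1)$ and $F \equiv 0$ on $\ms E_N(S_\star^2)$, and on each tube $L^{x,y}_N$ take $F(\eta)$ to agree with $\psi^{(k)}(\eta_x)$ along the $x$--$y$ axis and to be governed by $h_{x,y}$ in its dependence on spectator sites. The overlap property \eqref{f12} forces two tube-defined pieces to agree on their common part (contained in some $\mce x$, where $F$ is already constant), making the definition unambiguous. On the complement of all tubes $F$ is extended arbitrarily consistently with the boundary values; the precise choice is immaterial because $\mu_N$ places mass $o(1)$ there by \eqref{delta}.

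Next I decompose $D_N(F) = \sum_{(x,y)\in S_\star^1 \times S_\star^2} D^{x,y}_N + R_N$, where $D^{x,y}_N$ collects bonds with both endpoints in $L^{x,y}_N$. On a single tube the computation of Section \ref{sec3} reverses: the choice $\psi^{(k)}$ turns the Jensen--type inequality there into an equality, and after invoking Proposition \ref{zk} and sending the auxiliary parameters $b, \ell \to \infty$ one obtains
\[
N^{1+\alpha}\, D^{x,y}_N \;\longrightarrow\; \frac{\Cap_S(x,y)}{M_\star\, \kappa_\star\, \Gamma(\alpha)\, I_\alpha}\;.
\]
Bonds joining two distinct tubes lie in their intersection which, by \eqref{f12}, sits inside some $\mce x$ where $F$ is constant; hence they contribute zero.

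The main obstacle is the control of $R_N$, comprising bonds touching $\Delta_N$ or straddling the boundary of a tube. Here one uses $|F| \leq 1$, the boundedness of $g$ and $r$, and Lemmas \ref{s01}--\ref{s03} together with $\mu_N(\Delta_N) \to 0$ from \eqref{delta}. The delicate point is that $\psi^{(k)}$ has steep gradients near $i = \ell_N - k$ and $i = N - \ell_N$; the corresponding boundary-layer contributions are controlled by restricting the sum to configurations of small $\mu_N$-mass and applying the polynomial decay estimate of Lemma \ref{s01}. Choosing the auxiliary parameters $b, \ell$ to tend to infinity sufficiently slowly after $N$ makes every piece of $R_N$ of order $o(N^{-(1+\alpha)})$, and summing the main contributions over pairs $(x,y)$ yields the claimed upper bound.
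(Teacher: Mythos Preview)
Your proposal has the right intuition but a genuine gap in the treatment of the remainder $R_N$. You write that the extension of $F$ off the tubes ``is immaterial because $\mu_N$ places mass $o(1)$ there by \eqref{delta}''. This is not enough: the capacity is of order $N^{-(1+\alpha)}$, so $R_N$ must be $o(N^{-(1+\alpha)})$, not merely $o(1)$. With an arbitrary bounded extension the best bound on a bond is $\{F(\sigma^{zw}\eta)-F(\eta)\}^2 = O(1)$, and for instance configurations with mass spread over three sites in $S_\star$ contribute $\mu_N$-mass of order $N^{2-2\alpha}$, which for $1<\alpha<3$ is far larger than $N^{-(1+\alpha)}$. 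The same difficulty arises at the boundary of each tube: piecing the discrete profiles $\psi^{(k)}$ to a constant outside will in general create $O(1)$ jumps, and the boundary layer carries too much measure to absorb them.

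The paper resolves this by abandoning the discrete minimizer and building a \emph{globally smooth} test function. The one--dimensional profile is replaced by $H(t)=I_\alpha^{-1}\int_0^{\phi(t)}u^\alpha(1-u)^\alpha\,du$ with a smooth cutoff $\phi$, and the dependence on spectator sites is encoded through the explicit formula
\[
F_{xy}(\eta)\;=\;\sum_{j=1}^{\kappa-1}\{\mathbf f_{xy}(z_j)-\mathbf f_{xy}(z_{j+1})\}\,H\Big(\tfrac{\eta_x}{N}+\min\big\{\tfrac1N\textstyle\sum_{i=2}^{j}\eta_{z_i},\,\epsilon\big\}\Big),
\]
where $z_1,\dots,z_\kappa$ enumerates $S$ by decreasing values of $\mathbf f_{xy}$. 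A smooth partition of unity $\{\Theta^x_y\}$ glues the $F_{xy}$ into a single $F_x$, and $F_{S^1}=\sum_{x\in S^1}F_x$. The point is that this construction yields the uniform Lipschitz bound $|F(\sigma^{zw}\eta)-F(\eta)|\le C_\epsilon/N$ on \emph{all} of $E_N$, together with the localisation $F_x\equiv 0$ on $\{\eta_x\le 2\epsilon N\}$ and $F_x\equiv 1$ on $\{\eta_x\ge(1-3\epsilon)N\}$. With the $1/N^2$ gradient and Lemma~\ref{s01} one gets (Lemma~\ref{est1}) that the contribution of $E_N\setminus\ms I^x_N$ is $O\big(N^{-(1+\alpha)}\ell_N^{-(\alpha-1)}\big)=o(N^{-(1+\alpha)})$. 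The tube contribution is then computed (Proposition~\ref{est5}) by a Cauchy--Schwarz argument that matches the lower bound up to a factor $(1+\sqrt\epsilon)^{2\alpha+1}$, and one sends $\epsilon\downarrow 0$ at the end. Your sketch needs both the smooth global definition and the explicit $\mathbf f_{xy}$-weighted telescoping formula above; the phrase ``governed by $h_{x,y}$'' is not yet a construction.
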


In view of the variational formula for the capacity, to obtain an
upper bound for $\Cap_N(\ms E_N(S_\star^1), \ms E_N(S_\star^2) )$, we
need to choose a suitable function belonging to $\mc C_N(\ms
E_N(S_\star^1), \ms E_N(S_\star^2))$ and to compute its Dirichlet
form. Recalling the proof of the lower bound, we expect this candidate
to depend on the function which solves the variational problem for the
capacity of the underlying random walk and on the optimal function for
the zero range process with two sites.

To introduce the candidate, fix $x\in S^1_\star$, $y\in S^2_\star$ and
recall the definition of the tube $L^{x,y}_N$. In view of the proof of
the lower bound, the optimal function $F\in \mc C_N(\ms
E_N(S_\star^1), \ms E_N(S_\star^2))$ on the tube $L^{x,y}_N$ should
satisfy
\begin{equation*}
\begin{split}
F(\xi + \mf d_w) - F(\xi + \mf d_z) & \;=\; \{ {\bf f_{xy}}(w) 
- {\bf f_{xy}} (z) \} \, \{ F(\xi + \mf d_{x}) - F(\xi + \mf d_{y}) \} \, 
\\ & \;=\; \{ {\bf f_{xy}} (w) - {\bf f_{xy}} (z) \} \,
\{ G(\xi_x + 1) - G(\xi_x) \}\;,
\end{split}
\end{equation*} 
where $\bf f_{x,y}$ is the function which solves the variational
problem \eqref{ff1} in $\mc B(x,y)$ for the capacity of the underlying
random walk, and $G$ is the function appearing in the proof of the
lower bound.

Since, on the tube $L^{x,y}_N$, $\sum_{z\not = x,y} \xi_z \le
\ell_N$ and $G$ is a smooth function, paying a small cost we may
replace $\xi_x$ in the previous formula by $\xi_x + \sum_{z\in A}
\xi_z$ for any suitable set $A\subset S\setminus \{x,y\}$. The natural
candidate on the strip $L^{x,y}_N$ is therefore
\begin{equation*}
\hat F_{xy} (\xi) \;:=\; \sum_{j=1}^{\kappa-1} 
\{ {\bf f_{xy}} (z_j) - {\bf f_{xy}} (z_{j+1}) \}\, G \big( \xi_{z_1} +
\cdots + \xi_{z_j}\big) \;,
\end{equation*}
where $x=z_1, z_2, \dots, z_\kappa =y$ is an enumeration of $S$ such
that ${\bf f_{xy}} (z_j) \ge {\bf f_{x,y}} (z_{j+1})$ for $1\le j<
\kappa$. A simple computation shows that this function has the
required properties listed in the previous paragraph. 

Since the tubes $L^{x,y}_N$, $x\in S^1_\star$, $y\in S^2_\star$,
are essentially disjoints, the candidate $F$ should be equal to $\hat
F_{xy}$ on each tube $L^{x,y}_N$ and equal to some appropriate convex
combination of these functions on the complement.

We hope that this informal explanation helps to understand the
rigorous and detailed definition of the candidate we now present. Let
$\ms D\subset \bb R^{S}$ be the compact subset
$$
\ms D \;:=\; \{ u\in \bb R_{+}^{S} : \sum_{x\in S} u_x = 1 \}\;.
$$
For each different sites $x,y\in S$ and $\delta>0$, consider the
subsets of $\ms D$
$$
\ms D^x_\delta \;:=\;\{ u\in \ms D : u_x > 1-\delta \} 
\quad \text{and} \quad 
\ms L^{xy}_{\delta} \;:=\; \{ u\in \ms D : u_x + u_y\ge 1-\delta \}
$$
Clearly $\ms L^{xy}_{\delta} = \ms L^{yx}_{\delta}$ for any $x,y\in S$.

Fix an arbitrary $0<\epsilon<1/6$ and $x$ in $S$. Let $\ms K^x_y = \ms
K^x_y (\epsilon) := \ms L^{xy}_{\epsilon}\setminus \ms
D^x_{3\epsilon}$, $y\not = x$. Since $\ms K^x_y$, $y\in
S\setminus\{x\}$, is a collection of pairwise disjoint compact subsets
of $\ms D$, there is a family of smooth functions
$$
\Theta^x_{y}: \ms D\to [0,1]\;, \quad y\in S\setminus\{x\} \;,
$$
such that $\sum_{y \in S\setminus \{x\}} \Theta^x_{y}(u) = 1$ for all
$u$ in $\ms D$, and $\Theta^x_{y}(u) = 1$ for all $u$ in $\ms
K^x_y$ and $y\in S \setminus \{x\}$. 

Clearly, the sets $\ms L^{xy}_{\epsilon}$ are macroscopic versions of
the strips $L^{x,y}_N$. The functions $\Theta^x_{y}$ will be used to
define the candidate function in the complement of the cylinders
$L^{x,y}_N$. 

Let $H:[0,1]\to [0,1]$ be the smooth function given by
$$
H(t)\;:=\; \frac{1}{I_{\alpha}} \,\int_0^{\phi(t)} u^{\alpha}(1-u)^{\alpha}\,du\;,
$$
where $I_{\alpha}$ is the constant defined in (\ref{defi}) and
$\phi:[0,1]\to [0,1]$ is a smooth bijective function such that
$\phi(t)+\phi(1-t)=1$ for every $t\in [0,1]$ and $\phi(s)=0$ $\forall
s\in[0, 3\epsilon]$. It can be easily checked that
\begin{equation}\label{ph1}
H(t)+H(1-t)=1 \;,\quad \forall t\in [0,1]\;,
\end{equation}
$H|_{[0,3\epsilon]} \equiv 0$ and $H|_{[1-3\epsilon,1]}\equiv 1$. The
function $H$ is a smooth approximation of the function $G$ which
appeared in the proof of the lower bound.

Recall that $x\in S$ is fixed. For each $y\in S\setminus \{x\}$
consider the function ${\bf f_{xy}}:S\to [0,1]$ in $\mc B(x,y)$ such
that
\begin{equation*}
D_S({\bf f_{xy}}) \;=\; \Cap_S(x,y) \;=\; \inf_{f\in \mc B(x,y)} D_S(f)\;.
\end{equation*}
It is well known that ${\bf f_{xy}}(z)$ is equal to the probability
that the random walk with generator $\mc L_S$ reaches $x$ before $y$
when it starts from $z$. 

For each $y\in S\setminus\{x\}$ fix an enumeration
\begin{equation}\label{enum}
x\,=\,z_1\,, \;\;z_2\;,\;\;\dots\;,\;\;z_{\kappa}\,=\,y
\end{equation}
of $S$ satisfying ${\bf f_{xy}}(z_j)\ge {\bf f_{xy}}(z_{j+1})$ for
$1\le j\le \kappa -1$ and define $F_{xy}:E_N\to \bb R$ as the convex
linear combination
$$
F_{xy}(\eta) \;:=\; \sum_{j=1}^{\kappa -1}
 \{ {\bf f_{xy}}(z_j) - {\bf f_{xy}}(z_{j+1}) \} \, 
F^j_{xy}(\eta)\;,\quad \eta\in E_N\;,
$$
where each $F^j_{xy}:E_N\to \bb R$, $1\le j \le \kappa -1$, is given by
$F^1_{xy}(\eta)=H(\eta_x/N)$ and
\begin{equation}
\label{dfj}
F^j_{xy}(\eta) \;:=\; H\Big(\, \frac{\eta_x}{N} + 
\min\big\{\, \frac 1N \sum_{i=2}^{j}\eta_{z_i} \,; 
\epsilon  \, \big\} \,\Big) \;,\quad \eta\in E_N \;,
\end{equation}
for $2\le j \le \kappa-1$. 

The function $F_{xy}$ just defined is a smooth approximation of the
function $\hat F_{x,y}$ defined at the beginning of this section. It
is therefore the candidate to solve the variational problem for the
capacity on the tube $L^{x,y}_N$. It remains to define $F_{xy}$
in the exterior of the cylinders.

Let $F_x:E_N\to \bb R$ be given by
$$
F_x(\eta) \;:=\; \sum_{y\in S\setminus \{x\}} \Theta^x_y(\eta/N) \, 
F_{xy}(\eta)\;,
$$
where each $\eta/N$ is thought of as a point in $\ms D$ and $\{
\Theta^x_y : y \in S \setminus \{x\}\}$ is the partition of unity
established before. 

The following properties of $F_x$ are helpful in the proof of
Proposition \ref{s08}. It is easy to check that
\begin{equation}
\label{pf0}
F_x(\eta)=F_{xy}(\eta) \quad \textrm{for $\;\eta/N \in 
\ms L^{xy}_{\epsilon}$}\;.
\end{equation}
Indeed, if $\eta/N$ belongs to $\ms K^x_y$, $\Theta^x_y (\eta/N)=1$
proving the identity claimed. On the other hand, if $\eta/N$ belongs
to $\ms D^x_{3\epsilon}$, by definition of $H$, $F^j_{xz} (\eta) =1$
for all $z\in S\setminus \{x\}$, $1\le j\le \kappa-1$, so that
$F_{xz}(\eta) = F_x(\eta)$. By similar reasons,
\begin{multline}
\label{pf2}
F_x\equiv 1\quad \text{on $\{\eta\in E_N : \eta_x\ge
  (1-3\epsilon)N\}$} \\ 
\text{and} \quad F_x\equiv 0 \quad 
\text{on $\{\eta\in E_N : \eta_x\le 2\epsilon N$\}}\;.
\end{multline}
The minimum in definition (\ref{dfj}) is introduced precisely to
fulfill the second assertion in (\ref{pf2}). In particular, if $\eta/N
\in \ms D^z_{2\epsilon}$ for some $z\in S$ then
\begin{equation}
\label{pf3}
F_x(\eta) \;=\; {\bf 1}\{z=x\}\;.
\end{equation}

Since $H$, as well as each $\Theta^x_y$, is a smooth function, there
exists a finite constant $C_\epsilon$, which depends on $\epsilon$
through the definition of the smooth functions, but does not depend on
$N\ge 1$, such that
\begin{equation}
\label{pf1}
\max_{\eta\in E_N} | F_x(\sigma^{zw}\eta) - F_x(\eta) | \;\le\; 
\frac{C_{\epsilon}}{N}
\end{equation}
for every $z,w\in S$. 

Let  
$$
\ms I^{xy}_N \;:=\; \big\{ \eta\in E_N : \eta_x 
+ \eta_y \ge N - \ell_N \big\}\;, \quad x\not=y \in S \;.
$$
Clearly, $\ms I^{xy}_N=\ms I^{yx}_N$, $x$, $y\in S$ and, for every $N$
large enough, $\ms I^{xy}_N\subseteq \ms L^{xy}_\epsilon$. Let $\ms
I^x_N :=\cup_{y\in S\setminus \{x\}} \ms I^{xy}_N$. In what follows,
the value of the constant $C_{\epsilon}$ may change from line to line,
but will never depend on $N$.

\begin{lemma}
\label{est1}
For each $x\in S$ and every $N\ge 1$ large enough,
$$
\frac 12 \sum_{\eta\in E_N\setminus \ms I^x_N} \, 
\sum_{z,w\in S} \mu_N(\eta)g(\eta_z)r(z,w)
\big\{ F_x(\sigma^{zw} \eta) - F_x(\eta) \big\}^2 
\;\le \;  \frac{C_\epsilon \, m_\star(x)^{\epsilon N}}{N^{\alpha+1} 
\,(\epsilon \,\ell_N)^{\alpha -1}}\;\cdot
$$
\end{lemma}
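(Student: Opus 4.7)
The plan is to use the uniform gradient bound \eqref{pf1} to extract an $O(1/N^2)$ factor from each square, to restrict the sum to the configurations where $F_x$ actually varies, and then to apply Lemma \ref{s01} to the remaining spread-out configurations on $S\setminus\{x\}$.

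First, I apply \eqref{pf1} to replace every $\{F_x(\sigma^{zw}\eta)-F_x(\eta)\}^2$ by $C_\epsilon^2/N^2$, and bound $r(z,w)$, $g(\eta_z)\le g(2)=2^{\alpha}$ and the sum over $(z,w)\in S\times S$ by a $\kappa$-dependent constant. Second, I cut off the support: because a single bond jump changes $\eta_x$ by at most one, \eqref{pf2} implies that $F_x(\eta)=F_x(\sigma^{zw}\eta)=0$ for all $z,w$ whenever $\eta_x\le 2\epsilon N-1$, and both quantities equal $1$ whenever $\eta_x\ge(1-3\epsilon)N+1$. Hence the sum may be restricted to $\eta_x\in[2\epsilon N,(1-3\epsilon)N]$, so that both $k:=\eta_x$ and $N-k$ are of order $\epsilon N$.

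Third, using Lemma \ref{s02} in the form $Z_{N,S}\ge 1$ (giving $\mu_N(\eta)\le N^\alpha m_\star^\eta/a(\eta)$), I decompose $\eta=(k,\xi)$ with $\xi\in E_{N-k,S\setminus\{x\}}$; the condition $\eta\notin \ms I^x_N$ translates exactly to $\xi_y\le (N-k)-(\ell_N+1)$ for every $y\ne x$, i.e.\ $\xi\in E_{N-k,S\setminus\{x\}}(\ell_N+1)$. Since $\kappa-1\ge 2$, Lemma \ref{s01} applied to the state space $S\setminus\{x\}$, together with the trivial $m_\star^\xi\le 1$, yields
$$
\sum_{\xi\in E_{N-k,S\setminus\{x\}}(\ell_N+1)}\frac{m_\star^\xi}{a(\xi)}\;\le\;\frac{C_{\kappa-1}}{(N-k)^\alpha\,\ell_N^{\alpha-1}}\;.
$$
On the restricted support both $k^\alpha$ and $(N-k)^\alpha$ are at least $c_\epsilon N^\alpha$, so the $k$-sum collapses to $C_\epsilon N^{-\alpha}\sum_k m_\star(x)^k$. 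For $x\in S_\star$ this is $O(N^{1-\alpha})$; for $x\notin S_\star$ the geometric tail is bounded by $C_\epsilon m_\star(x)^{2\epsilon N}/N^\alpha\le C_\epsilon m_\star(x)^{\epsilon N}/N^{\alpha-1}$. Multiplying by $1/N^2$ from step one and $1/\ell_N^{\alpha-1}$ from Lemma \ref{s01} produces the claimed bound, with the factor $\epsilon^{-(\alpha-1)}$ absorbed into $C_\epsilon$.

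The main obstacle is the support cut-off in step two: to obtain the sharp power $N^{-(\alpha+1)}$ one must use \emph{both} thresholds on $\eta_x$, since the condition $\eta\notin \ms I^x_N$ by itself gives only $\eta_x\le N-\ell_N-1$, which is much too weak when $\ell_N\ll N$ (it would not prevent $(N-k)^\alpha$ from collapsing to $\ell_N^\alpha$). The upper threshold $\eta_x\le(1-3\epsilon)N$ comes exclusively from the flat region of $H$ encoded in \eqref{pf2}; without it, the computation would yield the much weaker bound $C_\epsilon\,m_\star(x)^{\epsilon N}/(N^2\ell_N^{2\alpha-1})$, which does not suffice later.
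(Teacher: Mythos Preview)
Your proof is correct and follows essentially the same route as the paper's. Both restrict the support via \eqref{pf2}, apply the gradient bound \eqref{pf1}, decompose $\eta=(k,\xi)$ with $\xi$ on $S\setminus\{x\}$, and invoke Lemma~\ref{s01} on the $\xi$-sum; the only cosmetic difference is that the paper handles the remaining $k$-sum by a second application of Lemma~\ref{s01} (viewing it as a two-site configuration in $E_{N,\{1,2\}}(\epsilon N)$), whereas you estimate it directly from $k,\,N-k\ge c_\epsilon N$ --- these are equivalent.
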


\begin{proof}
By property (\ref{pf2}), we can restrict the sum in the left hand side
to configurations $\eta\in E_N\setminus \ms I^x_N$ satisfying
$\epsilon N \le \eta_x \le (1-\epsilon)N$. So, by (\ref{pf1}), the
left hand side of the above inequality is bounded above by
$$
\frac{C_\epsilon}{N^2} \,\sum_{\substack{\eta\in E_N\setminus \ms
    I^x_N \\ \epsilon N \le \eta_x \le (1-\epsilon )N}} \mu_N(\eta)\;.
$$
This expression is bounded above by
$$
\frac{N^\alpha C_\epsilon}{Z_{N,S} N^2}\, 
\sum_{\epsilon N \le i \le (1-\epsilon)N} \; 
\sum_{\substack{\eta : \eta_x=i \\ \max\{\eta_y : y \not = x\}\le N-i-
    \ell_N}} \frac{m_\star^\eta }{a(\eta)}\; ,
$$
which can be re-written as
$$
\frac{N^\alpha C_\epsilon}{Z_{N,S} N^2}\, 
\sum_{\epsilon N \le i \le (1-\epsilon)N} 
\frac{m_\star(x)^i}{a(i) a(N-i)} \, 
\Big\{ \,(N-i)^{\alpha} \!\!\!
\sum_{\zeta \in E_{N-i, S\setminus\{x\}}( \ell_N )} 
\frac{m_\star^\zeta }{a(\zeta)} \, \Big\} \; \cdot
$$
By Lemma \ref{s01} for the expression inside braces, last expression
is bounded above by
$$
\frac{ C_{\epsilon}\, m_\star(x)^{\epsilon N} }
{Z_{N,S} \,N^2 \,\ell_N^{\alpha -1} } \, 
\Big\{\, N^\alpha \sum_{\epsilon N \le i \le (1-\epsilon)N} 
\frac{1}{a(i) a(N-i)} \,\Big\}  \; \cdot
$$
By Lemma \ref{s01} once more and Proposition \ref{zk} we obtain the
desired result.
\end{proof}

Fix a nonempty subset $S^1\subsetneq S$ and denote $S^2:= S\setminus
S^1 \not = \varnothing$. We define the function $F_{S^1}:E_N\to \bb R$
as
$$
F_{S^1}(\eta) \;:=\; \sum_{x\in S^1} F_x(\eta) \;.
$$
Let us define the following subsets of $E_N$
$$
D^x_N\;:=\; \{\eta\in E_N : \eta_x \ge N - 3\ell_N \}\;,\quad x\in S\;,
$$
so that $\ms E^x_N \subset D^x_N$, $x\in S_\star$. It follows from
(\ref{pf2}) that if $\eta\in D^x_N$ for some $x\in S$ then
\begin{equation}
\label{pfs0}
F_{S^1}(\eta) \;=\; {\bf 1}\{x\in S^1\}\;=\; F_{S^1}(\sigma^{zw}\eta)\;,
\end{equation}
for every $z,w\in S$ and every $N$ large enough. In particular,
$$
F_{S^1} \;\in\; \mc C_N\Big( \,  \bigcup_{x\in S^1}D^x_N , 
\bigcup_{y\in S^2} D^y_N \, \Big) \;.
$$
We shall use $F_{S^1}$ to get an upper bound for $\Cap_N\big( \,
\cup_{x\in S^1}D^x_N , \cup_{y\in S^2} D^y_N \, \big)$.
 
We first claim that for any $N$ large enough,
\begin{equation}
\label{pfs1}
F_{S^1}(\sigma^{zw}\eta) \;= \; 1 \;=\; F_{S^1}(\eta)  
\quad \textrm{for all } \eta \in \bigcup_{x,y\in S^1} 
\ms I^{xy}_N \text{ and } z,w\in S \;.
\end{equation}
To prove this claim, fix $x \not = y$ in $S^1$. By \eqref{pf2},
\eqref{pf0}, for $\eta/N\in \ms L^{xy}_{\epsilon}$, 
\begin{equation}
\label{ff2}
F_{S^1}(\eta) \;=\; F_{xy}(\eta) \;+\;  F_{yx}(\eta)\;.
\end{equation}

Recall from (\ref{enum}) the enumeration of $S$ defined according to
the values of ${\bf f_{xy}}$. Let $z_1,\dots,z_\kappa$ and
$w_1,\dots,w_\kappa$ be such enumerations obtained from $\bf f_{xy}$
and $\bf f_{yx}$, respectively. Since ${\bf f_{xy}}+ {\bf f_{yx}}
\equiv 1$, we can choose the enumerations in such a way that
$z_{n+1}=w_{\kappa -n}$, $0\le n\le \kappa -1$. With this convention,
an elementary computation shows that
\begin{equation*}
F_{xy}(\eta) + F_{yx}(\eta) \;=\; \sum_{j=1}^{\kappa -1} 
\{ {\bf f_{xy}}(z_j) - {\bf f_{xy}}(z_{j+1}) \} 
\big(\, F^j_{xy}(\eta) + F^{\kappa -j}_{yx}(\eta) \,\big)\;.
\end{equation*}
By (\ref{ph1}), the previous expression is equal to
\begin{equation*}
\sum_{j=1}^{\kappa -1} \{ {\bf f_{xy}}(z_j) - {\bf f_{xy}}(z_{j+1}) \} 
\;=\; 1 \;.
\end{equation*}
Claim \eqref{pfs1} follows from this identity and \eqref{ff2} since
$\ms I^{xy}_N \subset \ms L^{xy}_{\epsilon}$ for $N$ sufficiently
large.

For each subset $A \subseteq E_N$ and function $F:E_N\to \bb R$, let
$$
D_N( F ; A) \;:=\; \frac 12 \sum_{\eta\in A} \, 
\sum_{z,w\in S} \mu_N(\eta)g(\eta_z)r(z,w)
\big\{ F(\sigma^{zw} \eta) - F(\eta) \big\}^2 \;.
$$
With this notation, Lemma \ref{est1} can be stated as
\begin{equation*}
D_N(F_x ; E_N\setminus \ms I^x_N) \;\le\;   
\frac{C_{\epsilon} \, m_\star(x)^{\epsilon N}}{N^{\alpha+1}\, 
(\epsilon \,\ell_N)^{\alpha -1}}\;\quad \forall x\in S\;.
\end{equation*}
By Cauchy-Schwarz inequality,
\begin{eqnarray*}
D_N(F_{S^1} ; E_N\setminus \cup_{z\in S^1} \ms I^z_{N}) 
&\le& |S^1| \sum_{x\in S^1} D_N(F_{x} ; 
E_N\setminus \cup_{z\in S^1} \ms I^z_{N}) \\
&\le& |S^1| \sum_{x\in S^1} D_N(F_{x} ; E_N\setminus \ms I^x_{N})\;.
\end{eqnarray*}
Therefore, since $\ell_N\uparrow\infty$, it follows from Lemma
\ref{est1} that
\begin{equation}
\label{ef1}
\lim_{N\to\infty}  N^{\alpha + 1} \, D_N(F_{S^1} ; 
E_N\setminus \cup_{z\in S^1} \ms I^z_{N}) \;=\; 0\;.
\end{equation}

It remains to estimate $D_N(F_{S^1} ; \cup_{z\in S^1} \ms I^z_{N})$.
By definition of $\ms I^z_{N}$, $z\in S^1$, and by (\ref{pfs1}),
\begin{equation*}
D_N \Big(F_{S^1} ; \bigcup_{z\in S^1} \ms I^z_{N} \Big) \;=\; 
D_N\Big(\,  F_{S^1} ; { \bigcup_{\substack{x\in S^1 \\ y\in S^2 }}} 
\ms I^{xy}_N \, \Big) \; =\; \sum_{x\in S^1} \sum_{y\in S^2} 
D_N\big( F_{S^1} ; \ms I^{xy}_N \big) \;.
\end{equation*}
The last identity follows from (\ref{pfs0}) and the relation
$$
\ms I^{x_1y_1}_N \cap \ms I^{x_2y_2}_N \;\subseteq\; 
\bigcup_{z\in S} D^z_N \quad 
\textrm{for all $x_1,x_2\in S^1$ and $y_1,y_2\in S^2$}\;.
$$
Therefore, by (\ref{pf2}) and (\ref{pf0}) we finally conclude
that
\begin{equation}
\label{est2}
D_N \Big (F_{S^1} ; \bigcup_{z\in S^1} \ms I^z_{N} \Big) \;=\; 
\sum_{x\in S^1} \sum_{y\in S^2} D_N\big( F_{xy} ; \ms I^{xy}_N \big).
\end{equation}
We now provide an estimate for each term in this sum. To derive this
bound, in addition to the properties already imposed to the function
$\phi$, we also require that
\begin{equation}
\label{ad1}
\sup \big\{ \, \phi'(u) : u\in [0,1] \,\big\} \;\le\; 1+\sqrt\epsilon
\end{equation}
and
\begin{equation}
\label{ad2}
\sup\big\{\, \frac{\phi(u)}{u-\epsilon} : 
u\in [2\epsilon ,1] \,\big\} \;\le\; 1+\sqrt\epsilon\;.
\end{equation}
The first requirement can easily be accomplished since
$(1+\sqrt\epsilon)$ times the length of the interval $[3\epsilon ,
1-3\epsilon]$ is strictly greater than $1$ for $\epsilon$ small
enough. For (\ref{ad2}), it suffices that $\phi(u)\le
(u-\epsilon)(1+\sqrt\epsilon)$ for all $u\in [3\epsilon, 1]$ because
$\phi$ vanishes on $[0,3\epsilon]$. Since $(u-\epsilon)
(1+\sqrt\epsilon)>1$ for $u = 1-3\epsilon$ and every $\epsilon$ small
enough, it is possible to define a smooth function $\phi$ satisfying
(\ref{ad2}) without violating the other previously imposed properties.

According to the above discussion, in what follows we suppose that
$\epsilon$ is an arbitrary number in $(0,\epsilon_0]$ for a suitably
chosen $\epsilon_0>0$ and that $\phi$ satisfies the additional
properties (\ref{ad1}) and (\ref{ad2}).

\begin{proposition}
\label{est5}
For any $x,y\in S$, $x\not=y$,
$$
\limsup_{N\to\infty} N^{\alpha +1} D_N( F_{xy} ; \ms I^{xy}_N ) 
\;\le\;  \frac{ (1+ \sqrt{\epsilon})^{2\alpha +1} }
{ M_{\star}\kappa_\star I_{\alpha} \Gamma(\alpha) } \,
\Cap_S(x,y) \,{\bf 1}\{x,y\in S_{\star}\}\;. 
$$
\end{proposition}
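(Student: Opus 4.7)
The strategy reverses the argument of Proposition \ref{s05}: the candidate $F_{xy}$ was built precisely so that its Dirichlet form would approximately factor into the capacity of the underlying walk times a one-dimensional cost evaluated on the smooth profile $H$. First I compute the discrete gradient of $F_{xy}$ on $\ms I^{xy}_N$. Since $\sum_{i\ge 2}\eta_{z_i}\le \ell_N\ll \epsilon N$ on this set, the minimum in (\ref{dfj}) is eventually inactive and $F^j_{xy}(\eta) = H(\eta_x/N + N^{-1}\sum_{i=2}^j \eta_{z_i})$. Exploiting the ordering (\ref{enum}) of $z_1,\dots,z_\kappa$ by the values of ${\bf f_{xy}}$, a telescoping argument in $j$ yields
\[ F_{xy}(\sigma^{zw}\eta) - F_{xy}(\eta) \;\approx\; \{{\bf f_{xy}}(w) - {\bf f_{xy}}(z)\}\,\frac{H'(\eta_x/N)}{N}, \]
with squared error of lower order than $N^{-2}$, uniformly in $\eta \in \ms I^{xy}_N$ and $z,w\in S$.

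Substituting this into $D_N(F_{xy};\ms I^{xy}_N)$ and applying the change of variables $\xi = \eta - \mf d_z$ used in Proposition \ref{s05}, the sum over $z,w$ of $m_\star(z)\,r(z,w)\,\{{\bf f_{xy}}(w)-{\bf f_{xy}}(z)\}^2$ equals $2M_\star^{-1} \Cap_S(x,y)$ and decouples from the configurational sum
\[ \Sigma_N \;:=\; \sum_{\xi\in E_{N-1}} \frac{N^\alpha\, m_\star^\xi}{a(\xi)}\, H'(\xi_x/N)^2. \]
Conditioning on $\zeta=(\xi_z)_{z\notin\{x,y\}}$ with $|\zeta|=k$ and then on $\xi_x=i$, the inner sum $\sum_i [a(i)a(N-1-k-i)]^{-1}H'(i/N)^2$ is a Riemann sum for $N^{1-2\alpha}\int_0^1 H'(t)^2/[t^\alpha(1-t)^\alpha]\,dt$. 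Hence, for $x,y\in S_\star$ (so $m_\star(x)=m_\star(y)=1$), $N^{\alpha-1}\Sigma_N$ converges to $\bigl(\prod_{z\notin\{x,y\}}\Gamma_z\bigr)\int_0^1 H'(t)^2/[t^\alpha(1-t)^\alpha]\,dt$. If either site lies outside $S_\star$, then because $H'$ vanishes outside $[3\epsilon,1-3\epsilon]$ the factor $m_\star(x)^{\xi_x}$ or $m_\star(y)^{\xi_y}$ is exponentially small on the support, so $\Sigma_N\to 0$ and this accounts for the indicator ${\bf 1}\{x,y\in S_\star\}$. Combining with Proposition \ref{zk} and the identity $\prod_{z\notin\{x,y\}}\Gamma_z = Z_S/(\kappa_\star\Gamma(\alpha))$ for $x,y\in S_\star$, one obtains
\[ \limsup_N N^{\alpha+1} D_N(F_{xy};\ms I^{xy}_N) \;\le\; \frac{\Cap_S(x,y)\,{\bf 1}\{x,y\in S_\star\}}{M_\star\kappa_\star\Gamma(\alpha)} \int_0^1 \frac{H'(t)^2}{t^\alpha(1-t)^\alpha}\,dt. \]

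To conclude, I bound the remaining integral via $H'(t)=I_\alpha^{-1}\phi(t)^\alpha(1-\phi(t))^\alpha\phi'(t)$ and the extremal properties (\ref{ad1})--(\ref{ad2}): $\phi(t)\le(1+\sqrt\epsilon)\,t$, $1-\phi(t)=\phi(1-t)\le(1+\sqrt\epsilon)(1-t)$, and $\phi'\le 1+\sqrt\epsilon$. Extracting one power of each inequality from $H'(t)^2/[t^\alpha(1-t)^\alpha]$ and rewriting the remaining factor as the exact derivative $\phi(t)^\alpha(1-\phi(t))^\alpha\phi'(t)\,dt$, the substitution $u=\phi(t)$ converts it into $I_\alpha^{-2}\int_0^1 u^\alpha(1-u)^\alpha\,du = 1/I_\alpha$, leaving the advertised constant $(1+\sqrt\epsilon)^{2\alpha+1}/I_\alpha$.

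The principal technical obstacle is the uniform control of the correction terms — the Taylor remainder when replacing $H(t)-H(t-1/N)$ by $H'(t)/N$, the shift of the $H$-argument by $O(\ell_N/N)$ coming from the intermediate sites $z_2,\dots,z_{\kappa-1}$, and the replacement of $\eta_x$ by $\xi_x$ in the change of variables when $z=x$ — each of which is individually $o(1)$ after multiplication by $N^{\alpha+1}$, thanks to $H\in C^\infty$ and $\ell_N/N\to 0$. The bookkeeping is delicate because the weight $N^\alpha m_\star^\xi/a(\xi)$ concentrates precisely in the region where these small errors compete with the leading term.
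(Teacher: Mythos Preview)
Your proof is correct and reaches the target constant, but it proceeds by a genuinely different route than the paper.

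The paper never linearizes via Taylor expansion. For a jump $\sigma^{z_iz_j}$ with $i<j$ it keeps the exact telescoping identity
\[
F_{xy}(\sigma^{z_iz_j}\eta)-F_{xy}(\eta)
=\sum_{n=i}^{j-1}\{\mathbf f_{xy}(z_n)-\mathbf f_{xy}(z_{n+1})\}
\{F^n_{xy}(\sigma^{z_iz_j}\eta)-F^n_{xy}(\eta)\},
\]
and applies Cauchy--Schwarz with the nonnegative weights $\mathbf f_{xy}(z_n)-\mathbf f_{xy}(z_{n+1})$ to produce the factor $\{\mathbf f_{xy}(z_i)-\mathbf f_{xy}(z_j)\}^2$. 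Each increment $\{F^n_{xy}(\sigma^{z_iz_j}\eta)-F^n_{xy}(\eta)\}^2$ is then written \emph{exactly} as $I_\alpha^{-2}\bigl(\int_{\phi_k}^{\phi_{k+1}}u^\alpha(1-u)^\alpha\,du\bigr)^2$ using the integral definition of $H$; after absorbing $a(k)a(N-m-k)=N^{2\alpha}(k/N)^\alpha(1-(k+m)/N)^\alpha$, one of the two copies of the integral is bounded pointwise by $N^{-1}(1+\sqrt\epsilon)^{2\alpha+1}$ via (\ref{ad1})--(\ref{ad2}), and the remaining copies telescope over $k$ to at most $I_\alpha$. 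No Riemann-sum limit and no second-order remainder appear.

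Your route---replace each $F^n_{xy}$-increment by $H'(\eta_x/N)/N$, collapse the telescope to $\{\mathbf f_{xy}(w)-\mathbf f_{xy}(z)\}H'(\eta_x/N)/N$, pass to a Riemann-sum limit, and only then invoke (\ref{ad1})--(\ref{ad2}) on $\int_0^1 H'(t)^2/[t^\alpha(1-t)^\alpha]\,dt$---is valid and makes the emergence of the optimal constant more transparent, at the price of the remainder bookkeeping you identify. Two points to tighten. First, your $\Sigma_N$ is written over all of $E_{N-1}$, but the change of variables from $\ms I^{xy}_N$ only yields $\xi$ with $\sum_{z\ne x,y}\xi_z\le \ell_N+1$; the Riemann-sum convergence requires this restriction, since for unrestricted $k$ the inner sum over $i$ is only $O(N^{-\alpha})$ uniformly in $k$ (not $O(N^{1-2\alpha})$), and the $k$-tail is then not obviously negligible after multiplying by $N^{\alpha-1}$. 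Second, the inequality $\phi(t)\le(1+\sqrt\epsilon)t$ you use is a weakening of (\ref{ad2}); it suffices here only because $H'$ vanishes on $[0,3\epsilon]$.
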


\begin{proof}
Let $x=z_1, z_2, \dots, z_{\kappa}=y$ be the enumeration established
in the definition of $F_{xy}$, so that ${\bf f_{xy}}(z_n)\ge {\bf
  f_{xy}}(z_{n+1})$, $1\le n \le \kappa-1$. Fix two different
sites $z_{i}\not = z_j$ in $S$ with $1\le i<j \le \kappa$. By
definition of $F_{xy}$,
$$
F_{xy}(\sigma^{z_iz_j}\eta) - F_{xy}(\eta) \;=\; 
\sum_{n=i}^{j-1}\big( {\bf f_{xy}}(z_n)- {\bf f_{xy}}(z_{n+1}) \big)
\{ F^n_{xy}(\sigma^{z_iz_j}\eta) -  F^n_{xy}(\eta) \}\;.
$$
Thus, by the Cauchy-Schwarz inequality, the sum
\begin{equation}
\label{equ1}
\sum_{\eta\in \ms I^{xy}_N} \mu_N(\eta)g(\eta_{z_i})r(z_i,z_j)
\big\{ F_{xy}(\sigma^{z_iz_j}\eta) - F_{xy}(\eta) \big\}^2
\end{equation}
is bounded above by $\{{\bf f_{xy}}(z_i) - {\bf f_{xy}}(z_j)\}$ times
$$
\sum_{n=i}^{j-1} \big( {\bf f_{xy}}(z_{n}) - {\bf f_{xy}}(z_{n+1})
\big) \sum_{\eta\in \ms I^{xy}_N} \mu_N(\eta) g(\eta_{z_i}) r(z_i,z_j)
\big\{ F_{xy}^n(\sigma^{z_iz_j} \eta) - F_{xy}^n(\eta) \big\}^2\;.
$$
Performing the change of variables $\xi=\eta - \mf d_{z_i}$, the
second sum above is less than
\begin{equation*}
m_{\star}(z_i)r(z_i,z_j) \frac{N^{\alpha}}{Z_{N,S}} 
\sum_{\xi\in A^{xy}_N} \frac{m_{\star}^{\xi}}{a(\xi)} 
\big\{ F^n_{xy}(\xi) - F^n_{xy}(\xi + {\mf d}_{z_i}) \big\}^2 \;,
\end{equation*}
where $A^{xy}_N := \{ \xi\in E_{N-1, S} : \xi_x + \xi_y \ge N -
2\ell_N \}$. So far, we have shown that (\ref{equ1}) is bounded above
by $m_{\star}(z_i)r(z_i,z_j) N^{\alpha} Z_{N,S}^{-1} \{{\bf
  f_{xy}}(z_i) - {\bf f_{xy}}(z_j)\}$ times
$$
\sum_{n=i}^{j-1} \big( {\bf f_{xy}}(z_{n}) - {\bf f_{xy}}(z_{n+1})
\big) \sum_{\xi\in A^{xy}_N} \frac{m_{\star}^{\xi}}{a(\xi)} 
\Big\{ H \Big( \, \frac 1N + \sum_{r=1}^n \frac{\xi_{z_r}}{N} \Big) 
- H \Big( \sum_{r=1}^n \frac{\xi_{z_r}}{N} \Big) \, \Big\}^2 \;.
$$
Fix some $i\le n < j$. The second sum in the above expression may be
re-written as
\begin{multline}
\label{equ3}
\sum_{m=0}^{2\ell_N}\;\; \sum_{\zeta \in E_{m,S\setminus\{x,y\}}} 
\frac{m_\star^{\zeta}}{a(\zeta)} \sum_{\epsilon N \le k \le
  (1-2\epsilon)N} \frac{ m_\star(x)^k m_{\star}(y)^{N-m-k} }
{ a(k)a(N-m-k)} \\\Big\{ H \big( d_{k+1}(\zeta) \big) - H 
\big( d_{k}(\zeta) \big) \, \Big\}^2\;,
\end{multline}
where 
$$
d_{k}(\zeta) \;:= \; \frac kN \;+\; {\bf 1}\{n\ge 2\} 
\sum_{r=2}^n \frac{\zeta_{z_r}}{N}\;,\quad \epsilon N 
\le k \le (1-2\epsilon) N\;.
$$
To keep notation simple let $\phi_k$ stand for $\phi(d_k(\zeta))$.  By
the Cauchy-Schwarz inequality, the last expression is less than
$\{m_\star(x)m_\star(y)\}^{\epsilon N} N^{-2\alpha} I_{\alpha}^{-2} $
times
\begin{equation*}
\sum_{m=0}^{2\ell_N} \; \sum_{\zeta \in E_{m,S\setminus\{x,y\}}} 
\!\frac{m_\star^{\zeta}}{a(\zeta)} \sum_{k= \epsilon N}^{(1-2\epsilon)N} 
\int_{\phi_{k}}^{\phi_{k+1}} u^{\alpha}(1-u)^{\alpha}\,du  
\int_{\phi_k}^{\phi_{k+1}} \frac{ u^{\alpha}(1-u)^{\alpha} }
{ (\frac kN)^{\alpha}(1-\frac{k+m}N)^{\alpha} } \,du\;.
\end{equation*}
Since $m\le 2\ell_N$ then, for all $N$ large enough, the last integral
above is less than
$$
\{\phi_{k+1}- \phi_{k}\} \big(\sup_{u\in [0,1]}
\{ \phi(u)/(u-\epsilon) \} \,\big)^{2\alpha} \;\le\; 
\frac 1N (1+\sqrt{\epsilon})^{2\alpha +1}\;.
$$
The last inequality follows from assumptions (\ref{ad1}) and
(\ref{ad2}). Therefore, we conclude that (\ref{equ3}) is bounded above
by
\begin{equation*}
\frac{\{m_\star(x)m_\star(y)\}^{\epsilon N}
(1+\sqrt{\epsilon})^{2\alpha+1}}{I_{\alpha} N^{2\alpha+1}} \; 
\sum_{m=0}^{2\ell_N} \; \sum_{\zeta \in E_{m,S\setminus\{x,y\}}} 
\frac{m_\star^{\zeta}}{a(\zeta)}\;,
\end{equation*}
which in turn is bounded by
$$
\frac{Z_{S} \Gamma(\alpha) \{m_\star(x)m_\star(y)\}^{\epsilon N}
(1+\sqrt{\epsilon})^{2\alpha+1}}{\kappa_\star \Gamma_x \Gamma_y 
I_{\alpha} N^{2\alpha+1}}\;.
$$
Hence, we have shown that (\ref{equ1}) is bounded above by
$$
m(z_i)r(z_i,z_j)\{{\bf f_{xy}}(z_i) - {\bf f_{xy}}(z_j)\}^2
\Big( \frac{Z_{S}\Gamma(\alpha)\{1+ \sqrt{\epsilon}\}^{2\alpha +1}}
{M_{\star} \kappa_\star Z_{N,S}\Gamma_x \Gamma_y I_{\alpha}} \Big) 
\{ m_\star(x) m_\star(y)\}^{\epsilon N} N^{-\alpha - 1}\;.
$$

In a similar way we can get the same upper bound for (\ref{equ1}) if
we suppose instead that $j<i$. The assertion of the proposition
follows from this estimate and Proposition \ref{zk}.
\end{proof}

We are now in a position to prove Proposition \ref{s08}.  Let
$S^1_{\star}:=S^1\cap S_\star$, $S^2_{\star}:=S^2\cap S_\star$ and
suppose they are both nonempty sets. Since
$$
\Cap_N \big( \ms E_N(S^1_\star) , \ms E_N(S^2_\star) \big) \;\le\; 
\Cap_N\Big( \,  \bigcup_{x\in S^1}D^x_N , \bigcup_{y\in S^2} D^y_N \, \Big) 
\;\le\; D_N(F_{S^1})\;,
$$
it follows from (\ref{ef1}), (\ref{est2}) and Proposition \ref{est5}
that
$$
\limsup_{N\to\infty} N^{\alpha+1} \Cap_N \big( \ms E_N(S^1_\star) , 
\ms E_N(S^2_\star) \big) \;\le\; 
\frac{ 1 }{ M_{\star}\kappa_\star I_{\alpha} \Gamma(\alpha) } 
\, \sum_{x\in S^1_\star, y\in S^2_\star} \Cap_S(x,y)\;,
$$
after letting $\epsilon \downarrow 0$. Theorem \ref{mt1} follows from
Proposition \ref{s08} and Proposition \ref{s05}.

\section{Proof of Theorem \ref{mt2}}
\label{sec5}

In \cite{bl2}, we reduced the proof of the metastability of reversible
processes to the verification of three conditions, denoted by {\bf
  (H0)}, {\bf (H1)} and {\bf (H2)}. The proof of condition {\bf (H1)}
is similar to the one presented in \cite{bl3} for zero range processes
on complete graphs. However, in the case where $m$ is not uniform,
some modifications are needed to handle sites not in $S_\star$. This
is the only reason for which we have introduced the sequences
$b_N(z)$, $z\in S\setminus S_\star$ and the respective condition in
(\ref{clb}).

The following notation will be used throughout this section. For each
$x\in S_\star$, let $\xi^x_N\in E_N$ be the configuration with $N$
particles at $x$ and let $\breve{\ms E}^x_N$ represent the set $\ms
E_N(S_\star\setminus \{x\})$.

Condition {\bf (H2)} follows immediately from (\ref{delta}) since
$\mu_N(\ms E^x_N)\to 1/\kappa_\star$ for every $x\in S_\star$:
\begin{equation}
\tag*{\bf (H2)} \lim_{N\to\infty} \frac{ \mu_N(\Delta_N) }
{ \mu_N(\ms E^x_N)} 
\;=\;0 \;,\quad \forall x\in S_\star\;.
\end{equation}

Fix a configuration $\eta$ in $\ms E^{x}_N$. Since $\sum_{y\not =
  x} \eta_y \le \ell_N$ and $\eta_z \le b_N$, for $z\in S\setminus
S_\star$, by the explicit form of $\mu_N$,
\begin{equation}
\label{f19}
\begin{split}
\mu_N(\eta) \;& \ge\; C_0 \, \prod_{z\in S\setminus S_\star} 
m_\star(z)^{\eta_z} \, \prod_{y\in S\setminus\{x\}} \frac 1{a(\eta_y)} \\
\;&\ge\; \frac {C_0}{\ell_N^{\alpha (\kappa -1)}} \, 
\prod_{z\in S\setminus S_\star} m_\star(z)^{b_N(z)} \;.
\end{split}
\end{equation}
Hereafter, $C_0$ stands for a constant which does not depend on $N\ge
1$ and whose value may change from line to line. To estimate the
capacity, $\Cap_N(\{ \eta\}, \{\xi^x_N\})$ we consider a path
$\eta^{(j)}$, $0\le j \le p$, from $\eta^{(0)}=\eta$ to
$\eta^{(p)}=\xi^x_N$ obtained by moving to $x$, one by one, each
particle. Since there are at most $\ell_N$ particles to move, we can
take a path such that $p\le \kappa \,\ell_N $. Let $F$ be an arbitrary
function in $\mc C_N(\{\eta\} , \{\xi^x_N\})$. By Cauchy-Schwarz
inequality and the explicit expression of the Dirichlet form,
\begin{equation*}
1 \;=\; \Big\{\; \sum_{j=0}^{p-1} \big[\, F(\eta^{(j+1)}) -
F(\eta^{(j)}\,)\big] \; \Big\}^2 \;\le\; C_0 \, D_N(F)\, 
\sum_{j=0}^{p-1} \frac{1}{\mu_N(\eta^{(j)}) }\;\cdot
\end{equation*}
Therefore, by \eqref{f19},
\begin{equation*}
\Cap_N (\{\eta\},\{\xi^x_N\}) \;\ge\;  
\frac {C_0}{\ell_N^{1+\alpha (\kappa -1)}} \, 
\prod_{z\in S\setminus S_\star} m_\star(z)^{b_N(z)} \;.
\end{equation*}
The extra factor $\ell_N$ comes from the length of the path. Condition
{\bf (H1)} follows now from this estimate, Theorem \ref{mt1} and
condition (\ref{clb}):
\begin{equation*}
\tag*{\bf (H1)}
\lim_{N\to\infty}\; \frac{\Cap_N(\ms E^x_N, \breve{\ms E}^x_N)}
{ \inf_{\eta\in \ms E^x_N}\{\Cap_N( \{\eta\} , \{\xi^x_N\})\} }\; 
=\; 0 \;, \quad \forall x\in S_\star \;.
\end{equation*}

Finally condition {\bf (H0)} follows from Theorem \ref{mt1} as we show
below.  Denote by $R_N^{\ms E^\star}(\cdot,\cdot)$ the jump rates of
the trace process $\{ \eta^{\ms E_N^\star}_t : t\ge 0\}$ defined in
Section \ref{sec0}. For $x \not =y$ in $S_\star$, let
\begin{equation*}
r_N(x,y) \;:=\; \frac{1}{\mu_N(\ms E^x_N)} 
\sum_{\substack{\eta\in \ms E^x_N \\ \xi \in \ms E^y_N }} \mu_N(\eta) 
\, R_N^{\ms E^\star}(\eta,\xi)\;.
\end{equation*}

By Lemma 6.8 in \cite{bl2},
\begin{equation*}
\begin{split}
& \mu_N(\ms E^x_N)\, r_N(\ms E^x_N ,\ms E^y_N) \;=\; \frac 12 
\Big\{ \, \Cap_N \big(\ms E^x_N \,,\, \breve{\ms E}^{x}_N \big) \,+\, 
\Cap_N \big(\ms E^y_N \,,\, \breve{\ms E}^{y}_N \big) \\ 
& \qquad\qquad\qquad\qquad\qquad\qquad\qquad\qquad
-\; \Cap_N \Big(\ms E_N(\{x,y\})\,,\, 
\ms E_N(S_\star\setminus\{x,y\})\Big) \,\Big\}\;.
\end{split}
\end{equation*}
Therefore, by Theorem \ref{mt1}, since $\mu_N(\ms E^x_N)$ converges to
$\kappa_\star^{-1}$ for all $x$ in $S_\star$,
\begin{equation*}
\tag*{\bf (H0)}
\lim_{N\to\infty} N^{1+\alpha} r_N(\ms E^x_N , \ms E^y_N) \;=\; 
\frac {\Cap_S(x ,y)}{M_\star  \,  
\Gamma(\alpha) \, M_\alpha}  \;, 
\quad \forall x,y\in S_\star\,,\;x\not = y\; .
\end{equation*}
This proves Theorem \ref{mt2} as a consequence of Theorem 2.10 in \cite{bl2}.


\begin{thebibliography}{99}

\bibitem{al} I. Armend\'ariz, M. Loulakis. Thermodynamic limit for the
  invariant measures in supercritical zero range processes.
  Probab. Theory Related Fields {\bf 145}, 175--188 (2009).

\bibitem{bl2} J. Beltr\'an, C. Landim: Tunneling and metastability of
  continuous time Markov chains. preprint (2009).

\bibitem{bl3} J. Beltr\'an, C. Landim: Tunneling and metastability of
  continuous time Markov chains II. preprint (2009).

\bibitem{eh} M. R. Evans, T. Hanney: Nonequilibrium statistical
  mechanics of the zero-range process and related models.  \newblock
  J. Phys. A \textbf{38}(19), R195--R240 (2005)

\bibitem{emz06} M. R. Evans, S. N. Majumdar, R. K. P. Zia: Canonical
  analysis of condensation in factorised steady states.  \newblock J.
  Stat. Phys. \textbf{123}, 357--390 (2006)

\bibitem{fls} P. A. Ferrari, C. Landim, V. V. Sisko.  Condensation for
  a fixed number of independent random variables. 
  J. Stat. Phys. {\bf 128}, 1153--1158 (2007).
  
\bibitem{gl} C. Godr{\`e}che, J. M. Luck: Dynamics of the condensate
  in zero-range processes.  \newblock J. Phys. A \textbf{38},
  7215--7237 (2005)

\bibitem{gss} S. Gro{\ss}kinsky, G. M. Sch{\"u}tz, H. Spohn.
  Condensation in the zero range process: stationary and dynamical
  properties.  \newblock J. Statist. Phys. \textbf{113}, 389--410
  (2003)
  
\bibitem{jmp} I. Jeon, P. March, B. Pittel: Size of the largest
  cluster under zero-range invariant measures.  \newblock
  Ann. Probab. \textbf{28}, 1162--1194 (2000)

\end{thebibliography}
\end{document}